\providecommand{\U}[1]{\protect\rule{.1in}{.1in}}
\newtheorem{thm}{Theorem}
\newtheorem{lem}{Lemma}
\newtheorem{prop}{Proposition}
\newtheorem{algorithm}{Algorithm}
\theoremstyle{remark}
\begin{document}
\title[Perfect Sampling of Generalized Jackson Networks]{Perfect Sampling of Generalized Jackson Networks}
\author{J. Blanchet}
\address{Columbia University \\
Columbia University, Department of Industrial Engineering \& Operations Research.}
\email{jose.blanchet@columbia.edu}
\author{X. Chen }
\address{Wuhan University\\
Wuhan University, Economics and Management School}
\email{little4cat@gmail.com}
\keywords{Perfect sampling, Generalized Jackson Networks, Dominated Coupling From The
Past, Renewal Theory.}
\date{\today}

\begin{abstract}
We provide the first perfect sampling algorithm for a Generalized Jackson
Network of FIFO queues under arbitrary topology and non-Markovian assumptions
on the input of the network. We assume (in addition to stability) that the
interarrival and service times of customers have finite moment generating
function in a neighborhood of the origin, and the interarrival times have
unbounded support.

\end{abstract}
\maketitle



\section{Introduction}

We present the first perfect sampling algorithm (i.e. unbiased sampling also
known as exact simulation) for the steady-state of so-called Generalized
Jackson Networks (GJNs).

A precise description of a GJN consists of $d$ single server queueing
stations, with infinite capacity waiting rooms and each operating under a
standard FIFO protocol. The $i$-th station receives arrivals from outside the
network (i.e. external arrivals) according to a renewal process with arrival
rate $\lambda_{i}\in\lbrack0,\infty)$ (note that $\lambda_{i}=0$ is possible,
meaning that the $i$-th station does not receive external arrivals, but we
assume that $\lambda_{i}>0$ for some $i\in\{1,...,d\}$). All the renewal
arrival processes are independent. We use $\lambda=\left(  \lambda
_{1},...,\lambda_{d}\right)  ^{T}$ to denote the vector of arrival rates.
(Throughout this paper all vectors are column vectors unless otherwise stated,
and we use $^{T}$ to denote transposition.)

All the service requirements are independent. Inter-arrival times and service
requirements are all independent. The mean service time at station $i$ is
$1/\mu_{i}\in\left(  0,\infty\right)  $. We use $\mu=\left(  \mu_{1}%
,...,\mu_{d}\right)  ^{T}$ to denote the vector of service rates. The service
requirements at station $i$ are i.i.d. (independent and identically distributed).

Immediately after a customer is served at the $i$-th station, he will go to
station $j$ with probability $Q_{i,j}\in\lbrack0,1]$ for $j\in\{1,...,d\}$ and
he will leave the network with probability $Q_{i,0}=1-\sum_{j=1}^{d}Q_{i,j}$.
We write $Q=(Q_{i,j}:1\leq i,j\leq d)$ for the associated $d\times d$
substochastic routing matrix. The network is assumed to be open in the
sense that $Q^{n}\rightarrow0$ as $n\rightarrow\infty$. We
assume, without the loss of generality, that $Q_{i,i}=0$. Otherwise we can redefine the service requirements
via a geometric convolution with success probability equal to $1-Q_{i.i}$ and
thus represent the network in terms of a model in which $Q_{i,i}=0$.

The so-called flow equations are given by
\begin{equation}
\phi_{i}=\lambda_{i}+\sum_{j=1}^{d}Q_{j,i}\phi_{j}, \label{EQ_FLOW_INTRO}%
\end{equation}
which implies that $\phi=\left(  \phi_{1},...,\phi_{d}\right)  ^{T}$ satisfies
$\phi=\left(  I-Q^{T}\right)  ^{-1}\lambda$. (Note that $\left(  I-Q\right)
^{-1}=I+Q+Q^{2}+....$ is well defined because the network is open.)

Under the previous setup, the GJN is stable (in the sense of possessing a
steady-state distribution for the workload and queue length processes at each
station) if and only if
\begin{equation}
\phi<\mu, \label{Eq:Stability}%
\end{equation}
where the inequality is understood componentwise.

Under mild assumptions (including for example the case of Poisson arrivals or
phase-type inter-arrival and service times) we provide the first exact
simulation algorithm for a Generalized Jackson Network. (The precise
assumptions, listed as Assumptions 1-4, are given in Section
\ref{Section_Description_GJN}.) All previous algorithms operate under
more-restrictive assumptions relative to what is required in our algorithm.
The more restrictive assumptions include: a) The networks are Markovian (i.e.
inter-arrivals and service times are assumed to be exponential), or b) The
networks are bounded (i.e. the stations are assumed to have rooms with finite
buffer sizes); see, for example, \cite{Busi15} and \cite{MurdochTakahara2006}.

The work of \cite{Blanchet&Chen} is closest in spirit to our algorithm here.
The authors in \cite{Blanchet&Chen} consider a so-called stochastic fluid
network (SFN), which is much simpler than a GJN because there is much less
randomness in the system. Customers that arrive at station $i$ in a SFN bring
service requirements which are i.i.d., this part is common to the GJN model.
However, the workload is processed and transmitted to the stations in the
network in the form of a fluid; so $Q_{i,j}$ represents the exact proportion of flow from
station $i$ to $j$. Therefore, in particular, in a SFN there is no concept of
queue-length. In addition, the SFNs treated in \cite{Blanchet&Chen} has
Poisson or Markov modulated arrivals and so even the arrival processes that we
consider here are more general. We extend the algorithm in
\cite{Blanchet&Chen} in order to deal with arbitrary renewal processes (as
opposed to only Poisson arrivals), the condition on Assumption 2 is needed to
apply the technique of \cite{Blanchet&Chen} based on a suitable exponential
tilting (see also \cite{Ensor&Glynn} and \cite{Blanchet&Sigman}), this
connection to exponential changes of measure explains the need for Assumption 3.

The algorithm in \cite{Blanchet&Chen} allows to obtain a sample from the
maximum from time 0 to infinity, of a multidimensional random walk with negative drift. Here we extend
the algorithm to sample from the running maximum (componentwise), that is, the maximum from time
$n$ to infinity, for all $n\geq0$. Our extension is given in Algorithm
\ref{algo last}.

The real difficulty in doing perfect sampling of GJNs, however, arises from
the fact that each customer might bring an arbitrarily long sequence of
service requirements, because the description of the routing topology admits
the possibility of visiting a given station multiple times. In addition,
contrary to SFN's, GJN's are not monotone in their initial condition. This
lack of monotonicity introduces challenges when applying standard perfect
simulation techniques.

Our strategy is to apply Dominated Coupling From The Past (DCFTP), which
requires the use of a suitable dominating process simulated backwards in time
and in stationarity. We are able to use sample path comparison results
developed by \cite{Chang&etal}, which allow us to bound the total number of
customers in the GJN by a set of suitably defined autonomous queues which are
correlated. In addition, we provide additional sample path comparison results
which are of independent interest (see Theorem \ref{Thm_Domination}).

We need to simulate, backwards in time, stationary and correlated autonomous
queues. These processes can be represented, componentwise, in terms of an
infinite horizon maximum of the difference of superposition of renewal
processes (the difference having negative drift so the infinite horizon
maximum is well defined). The fact that the queues are correlated comes from
the fact that each jump in the renewal processes may correspond to a departure from one station, and at the same time, an arrival to another station due to the internal routing. We are able to extend the technique in
\cite{Blanchet&Chen} in order to deal with multidimensional and correlated
renewal processes and thus complete the application of the DCFTP protocol.

The rest of the paper is organized as follows. In Section 2, we briefly
discussing how DCFTP operates and describe the GJN. In Section 3, we construct a class of
dominating processes which will be useful for our development. We provide a
general overview of our algorithm and the main result of the paper in Section 4. Then we proceed by describing how to implement the subroutines
of our algorithm in Section 5 and finish the paper with a numerical experiment
in Section 6.

\section{An introduction to DCFP and GJN}
\subsection{Elements of Dominated Coupling From The Past}

Let us first provide a general description of DCFTP. Consider a stationary
process $\left(  Y\left(  t\right)  :t\in\left(  -\infty,\infty\right)
\right)  $, we are interested in sampling from $Y\left(  0\right)  $. Suppose
that the following is available to the simulator:

\begin{itemize}
\item[\textit{DCFTP 1}] A pair of stochastic processes $\left(  Y^{-}\left(
t\right)  :t\in\left(  -\infty,\infty\right)  \right)  $ and $\left(
Y^{+}\left(  t\right)  :t\in\left(  -\infty,\infty\right)  \right)  $ coupled
in such a way that $Y^{-}\left(  t\right)  \preccurlyeq Y\left(  t\right)
\preccurlyeq Y^{+}\left(  t\right)  $ for all $t$, where \textquotedblleft%
$\preccurlyeq$\textquotedblright\ is any partial order.

\item[\textit{DCFTP 2}] It is possible to simulate $\bar{\omega}:=\left(
Y^{-}\left(  t\right)  ,Y^{+}\left(  t\right)  :t\in\lbrack-T,0]\right)  $ for
a (finite almost surely) time $-T$ in the past such that: a)$\ Y^{+}\left(
-T\right)  =Y^{-}\left(  -T\right)  $, and b) $Y\left(  0\right)  $ can be
obtained from the information used to generate $\bar{\omega}$.
\end{itemize}

A time $-T$ satisfying the conditions in \textit{DCFTP 2} is known as a
\textit{coalescence time}.

Generally, at least in the setting of Markov processes, the condition that
$Y^{+}\left(  -T\right)  =Y^{-}\left(  -T\right)  $ combined with
\textit{DCFTP 1} above indicates that the value of $Y\left(  -T\right)  $ is known
and therefore at least the marginal\ evolution of $Y\left(  \cdot\right)  $ is
completely determined, and so is the value of $Y(0)$. However, it is important to keep in mind that the
processes $Y^{+},Y^{-}$ and $Y$ must remain coupled.

The validity of DCFTP is proved in \cite{Kendall}; the method is an extension
of CFTP, which was proposed in the seminal paper of \cite{Propp&Wilson}.
Intuitively, the idea is that if one could simulate the path $\left(
Y^{-}\left(  t\right)  ,Y\left(  t\right)  ,Y^{+}\left(  t\right)
:t\leq0\right)  $, from the infinite past, then one could obtain  $Y\left(
0\right)  $ in stationality. However, since we can simulate
$\bar{\omega}$ in finite time and use this information to reconstruct
$Y\left(  0\right)  $, we do not need to simulate the process from the
infinite past.

Obtaining the elements described in bullets \textit{DCFTP 1-2} above often
requires several auxiliary constructions. In our particular application $Y\left(  \cdot\right)  $ corresponds to the
number in system in each station (so $Y\left(  \cdot\right)  $ is a
$d$-dimensional process) and we shall set $Y^{-}\left(  t\right)  =0$. The partial order relationship \textquotedblleft$\preccurlyeq$%
\textquotedblright\ is based on the sum of the coordinates (i.e $x\preccurlyeq
y$ if and only if $\sum x_{i}\leq\sum y_{i}$).

The process $Y^{+}\left(  \cdot\right)  $ is the one that will require
auxiliary constructions, we shall first construct an auxiliary process $Y^{0}
$ which dominates $Y$ based on artificially increasing (just slightly) the
service requirement of all stations in the GJN. Then we will construct $Y^{+}$
which is a process similar to a GJN, except that the servers will enjoy
vacation periods whenever there is no customer waiting in queue to be served. Finally, we will need an additional process, $Y^{\prime}$, which is corresponding to the autonomous queues and  will allow us
to identify the coalescence time $-T$.

\subsection{Description of the GJN\label{Section_Description_GJN}}

In this section, we give detailed description and assumptions of the
generalized Jackson network (GJN) we are going to simulate.

We consider a GJN consisting of $d$ service stations and each station has a
single server. In the rest of our paper, we shall denote the GJN by
$\mathcal{N}$. The basic assumptions of the GJN $\mathcal{N}$ is as follows: .

\begin{itemize}
\item \textit{Arrival times}: Customers arrive (from the external world) at
station $i$ according to some renewal process with i.i.d. interarrival times
$U_{i}(n)$. In particular, $U_{i}(n)=A_{i}(n)-A_{i}(n-1)$ where $A_{i}\left(
n\right)  $ is the arrival time of the $n$-th customer of station $i$. The
arrival rate $\lambda_{i}$ is defined as $E[U_{i}\left(  k\right)]
=1/\lambda_{i}\in(0,\infty]$. If $\lambda_{i}=0$ then $A_{i}\left(  n\right)
=\infty$. (By convention we let $\lambda_{i}U_{i}\left(  1\right)  =0$ if
$\lambda_{i}=0$.)

\item \textit{Service times}: $\sigma_{i}(k)$ is the service time of the
$k$-th customer that is served in station $i$. $\{\sigma_{i}(k)\}$ is a i.i.d.
sequence and independent of the arrival times, routing indicators and service
times of the other stations. The service rate $\mu_{i}$ is defined as
$E[\sigma_{i}(k)]=1/\mu_{i}$.

\item \textit{Routing mechanism}: After finishing service, the $k$-th customer
in station $i$ is assigned with a routing indicator $r_{i}\left(  k\right)
\in\{0, 1, 2, ..., d\} $ and it will leave the network immediately if
$r_{i}(k)=0$, or join the queue of station $r_{i}(k)$ otherwise. $\{r_{i}(k)\}$
is a i.i.d. sequence and independent of the arrival times, service times and
routing indicators of the other stations. The routing probability $Q_{ij}$ is
defined as $Q_{ij}=P(r_{i}(k)=j)$.
\end{itemize}

Clearly the sequences $\{A_{i}\left(  n\right)  :n\geq1\}$ together with
$\{\left(  r_{i}\left(  k\right)  ,\sigma_{i}\left(  k\right)  \right)
:k\geq1\}$ for $i\in\{1,...,d\}$ are enough to fully describe the evolution of
the queueing network, assuming that the initial state of the network is given. So,
let us assume that the network is initially empty and let us write
$Y_{i}\left(  t\right)  $ to denote the number of customers in the $i$-th
service station at time $t$, including both in the queue and in service, for $i\in\{1,...,d\}$. As noted in the Introduction, the
flow equations are given in equation (\ref{EQ_FLOW_INTRO}), the vector
$\phi_{i}$'s in $\phi=\left(  \phi_{1},...,\phi_{d}\right)  ^{T}$ are called
the net-input rates of the GJN.

In addition to the stability condition given in (\ref{Eq:Stability}),
throughout this paper we shall impose the following assumptions:

\textbf{Assumptions:}

\begin{enumerate}[1.]
\item The inter-arrival times have unbounded support. That is, if $\lambda
_{i}>0$ then $P\left(  U_{i}\left(  1\right)  >m\right)  >0$ for all
$m\in\left(  0,\infty\right)  $.

\item There exists $\delta>0$ such that for all $i$
\begin{align}
\sup_{t\geq0}E[\exp\left(  \delta\lambda_{i}\left(  U_{i}\left(  1\right)
-t\right)  \right)  |U_{i}\left(  1\right)   &  >t]<\infty,\label{Eq:AS_2}\\
\sup_{t\geq0}E[\exp\left(  \delta\left(  \sigma_{i}\left(  1\right)
-t\right)  \right)  |\sigma_{i}\left(  1\right)   &  >t]<\infty.\nonumber
\end{align}
In particular, $\lambda_{i}U_{i}\left(  1\right)  $ and $\sigma_{i}\left(
1\right)  $ have a finite moment generating function for all $i$.

\item The inter-arrival times and service times can be individually simulated
exactly, and moreover, we can simulate from exponentially tiltings (i.e. the
natural exponential family) associated to these distributions -- see equation
(\ref{Eq:Exp_Tilt}).

\item The inter-arrival times and service times have a continuous distribution.
\end{enumerate}

Assumptions 1 to 4 are relatively mild and encompass a large class of models
of interest including Poisson arrivals and phase-type service time
distributions (and mixtures thereof). We shall also discuss immediate
extensions to the case of Markov modulated GJNs. Assumption 1 ensure that the
network will empty infinitely often with probability one. We require the
existence of a finite moment generating function because we will apply an
extension of a technique developed in \cite{Blanchet&Chen}, which is based on
exponential tiltings and importance sampling, therefore the need for
Assumption 3. We need the uniformity on exponential moments for conditional
excess distributions in Assumption 2 because we apply a Lyapunov bound similar
to that developed by \cite{Gamarnik&Zeevi}. However, we believe that this
uniformity requirement is a technical condition and that our main result holds
assuming only that (\ref{Eq:AS_2}) is satisfied for $t=0$. Finally, Assumption
4 is introduced for simplicity to avoid dealing with simultaneous events.

Under Assumptions 1 to 4 we provide an algorithm for sampling from the
steady-state queue-length and workload processes at each station in the
network. The number of random variables required to terminate our proposed
procedure has a finite moment generating function in a neighborhood of the
origin (in particular the expected termination time of the algorithm is finite).

\section{Construction of the Auxiliary and Dominating Processes}

In this section, we shall construct two dominating processes for $Y(t)$, related
to vacation queues and autonomous queues.

\subsection{An Auxiliary GJN}

Before constructing the two bounding systems, we need to construct an
auxiliary upper bound GJN, which we shall denote by $\mathcal{N}^{0}$. The
auxiliary GJN $\mathcal{N}^{0}$, is obtained from the original GJN,
$\mathcal{N}$, by slightly decreasing the service rates at each station while
keeping the network stable. In particular, we shall select constants
$a_{i}\geq1$ for $i\in\{1,...,d\}$ momentarily. We define $\sigma_{i}%
^{0}\left(  k\right)  =\sigma_{i}\left(  k\right)  a_{i}$, and correspondingly
set $\mu_{i}^{0}=\mu_{i}/a_{i}$ for $a_{i}\geq1$ so that
$\mu^{0}=\left(  \mu_{1}^{0},...,\mu_{d}^{0}\right)  ^{T}$, satisfies,%
\begin{equation}
\lambda<\left(  I-Q^{T}\right)  \mu^{0}, \label{Eq:Stability_Bound}%
\end{equation}
componentwise. It is always possible to pick
$a_{i}\geq1$ satisfying (\ref{Eq:Stability_Bound}). In order to see this,
reason as follows. First, define $\mu^{0}=\left(  I-Q^{T}\right)  ^{-1}\left(
\lambda+\delta e\right)  $ (where $e$ is the vector of ones and $\delta>0$ is
to be chosen). Since $\phi=\left(  I-Q^{T}\right)  ^{-1}\lambda<\mu$ and the
matrix $\left(  I-Q^{T}\right)  ^{-1}$ has non-negative elements, we can
choose $\delta>0$ small enough so that $\mu_{i}>\mu_{i}^{0}$ and therefore
$a_{i}=\mu_{i}/\mu_{i}^{0}>1$. Moreover, by definition
\[
\left(  I-Q^{T}\right)  \mu^{0}=\lambda+\delta e>\lambda.
\]

The evolution of $\mathcal{N}^{0}$, initially empty, is also
fully described by the sequences $\{A_{i}\left(  n\right)  :n\geq1\}$ and
$\{\left(  r_{i}\left(  k\right)  ,\sigma_{i}^{0}\left(  k\right)  \right)
:k\geq1\}$, $i\in\{1,...,d\}$, where $\sigma_{i}^{0}(k)=a_{i}\sigma_{i}(k)$.
Let $Y_{i}^{0}\left(  t\right)  $ be the number of customers in the $i$-th
service station at time $t$ (including both in queue and in service), for
$i\in\{1,...,d\}$. As we shall review in Theorem \ref{Thm_Domination}, given
the same initial condition at time 0, $\sum_{i}Y_{i}^{0}(t)\geq\sum_{i}%
Y_{i}(t)$, for all $t$ ; this is intuitive since every customer in
$\mathcal{N}^{0}$ needs more service time at every station than in $\mathcal{N}$.


\subsection{The Vacation System}

We now describe the bonding system consisting of vacation queues, which we
shall denote by $\mathcal{N}^{+}$. The system $\mathcal{N}^{+}$ evolves
following almost the same rules as $\mathcal{N}^{0}$ except that, whenever the
$i$-th server completes a service \textit{and} no customer is waiting in queue
to be served, the server enters a vacation period following the same
distribution of $\sigma^{0}_{i}(k)$. The vacation periods are all independent, and also independent of the arrival times, service times and routing
indicators. If at least one customer is waiting in queue, the server will work
on the service requirement of the first customer waiting in queue.

In more detail, the vacation periods are not interrupted when a new customer
arrives, instead the customer waits until the server finishes its current
activity (current vacation or service). Moreover, if after completing a
vacation the server still finds the queue empty, a new vacation period starts, and the server keeps taking vacation periods until, upon return
of a vacation, the server finds at least one customer present in the queue,
waiting to be served.

The evolution of the vacation system $\mathcal{N}^{+}$, coupled with
$\mathcal{N}^{0}$, is fully described by the sequences $\{A_{i}\left(
n\right)  :n\geq1\}$ $\{\left(  r_{i}\left(  k\right)  ,\sigma_{i}^{0}\left(
k\right)  \right)  :k\geq1\}$, $i\in\{1,...,d\}$, along with the vacation
period sequence $\left\{  v_{i}^{0}\left(  k\right)  :k\geq1\right\}  $. For
each $i$, the sequence $\left\{  \upsilon_{i}^{0}\left(  k\right)
:k\geq1\right\}  $ is an i.i.d. copy of the sequence $\left\{  \sigma_{i}%
^{0}\left(  k\right)  :k\geq1\right\}  $. The random variable $\upsilon
_{i}^{0}\left(  k\right)  $ denotes the $k$-th vacation period taken by the
$i$-th server.

Let us write $Y_{i}^{+}\left(  t\right)  $ to denote the number of customers
in the $i$-th station at time $t$ (including both in queue and in service). As
stated in Theorem \ref{Thm_Domination} below, we have that, given the same
initial condition at time 0, $\sum_{i}Y_{i}^{+}(t)\geq\sum_{i}Y_{i}^{0}(t)$
for all $t$; this is intuitive since every customer in $\mathcal{N}^{+}$ keeps
the same service time and routing (relative to $\mathcal{N}^{0}$), but the
departure times must occur later due to the vacation periods.


\subsection{The Autonomous System}

The final bounding system is a set of the so-called autonomous queues which we
shall denote by $\mathcal{N}^{\prime}$. In this subsection, we shall describe
the evolution of this system and provide an expression for its number of
customers in queue. In the next subsection, we shall explain how
$\mathcal{N}^{\prime}$ is coupled with $\mathcal{N}^{+}$.

Define $\left(  N_{i}\left(  t\right)  :t\geq0\right)  $ to be the non-delayed
renewal process corresponding to the sequence $\{A_{i}\left(  n\right)
:n\geq1\}$; that is, defining $A_{i}\left(  0\right)  =0$, by convention we
have
\[
N_{i}\left(  t\right)  =\max\{n\geq0:A_{i}\left(  n\right)  \leq t\}.
\]
Of course, $N_{i}\left(  t\right)  \equiv0$ if $\lambda_{i}=0$.

We let $\left(  V_{i}^{0}\left(  k\right)  :k\geq1\right)  $ be a sequence of
i.i.d. random variables with the same distribution $\sigma_{i}^{0}\left(
k\right)  $ (and therefore as $\upsilon_{i}^{0}\left(  k\right)  $). We write
$B_{i}\left(  0\right)  =0$ and set $B_{i}\left(  n\right)  =V_{i}^{0}\left(
1\right)  +...+V_{i}^{0}\left(  n\right)  $. Then, define a renewal process%
\[
D_{i}\left(  t\right)  =\max\{n\geq0:B_{i}\left(  n\right)  \leq t\}.
\]
Moreover, for each $i\in\{1,...,d\}$ we define a sequence of i.i.d. random
variables $(r_{i}^{\prime}\left(  k\right)  :k\geq1)$ such that
\[
P\left(  r_{i}^{\prime}\left(  k\right)  =j\right)  =Q_{i,j},
\]
for all $j\in\{0,1,...,d\}$. We then define
\[
D_{i,j}\left(  t\right)  =\sum_{k=1}^{D_{i}\left(  t\right)  }I\left(
r_{i}^{\prime}\left(  k\right)  =j\right)  (\text{so that } D_{i}=\sum
_{j=0}^{d}D_{i,j}) .
\]

The random variables $V_{i}^{0}\left(  k\right)  $'s and $r_{i}^{\prime
}\left(  k\right)  $'s are all mutually independent and independent of the
$A_{i}\left(  k\right)  $'s for all $i\in\{1,...,d\}$ and $k\geq1$.

Let $Y_{i}^{\prime}\left(  t\right)  $ be the number of customers in the queue
at the $i$-th station of $\mathcal{N}^{\prime}$.
By the definition of autonomous queues, $Y_{i}^{\prime}\left(  \cdot\right)  $
evolves according to the following Stochastic Differential Equation
\begin{align}
\text{d}Y_{i}^{\prime}\left(  t\right)   &  =\text{d}N_{i}\left(  t\right)
+\sum_{j:j\neq i,1\leq j\leq d}\text{d}D_{j,i}\left(  t\right)  -I\left(
Y_{i}^{\prime}\left(  t_{-}\right)  >0\right)  \text{d}D_{i}\left(  t\right)
,\label{SDE}\\
Y_{i}^{\prime}\left(  0\right)   &  =0.\nonumber
\end{align}
In simple words, the number of customers in queue at the $i$-th station
increases when there is an external arrival (d$N_{i}\left(  t\right)  =1$) or
an arrival (either virtual or true, see the explanation in Section
\ref{Section_Coupling}) from any other station ($\sum_{j=0}^{d}$%
d$D_{j,i}\left(  t\right)  =1$), and it decreases at time $t$ after the
completion of an activity (service or vacation, see the explanation is Section
\ref{Section_Coupling}) only if the queue is not empty (i.e. $I\left(
Y_{i}^{\prime}\left(  t_{-}\right)  >0\right)  $ and d$D_{i}\left(  t\right)
=1$).

One nice property of $\mathcal{N}^{\prime}$ is that we have a convenient
expression for $Y^{\prime}_{i}(t)$, which is essential for our CFTP algorithm
to work. Let's define
\[
X_{i}\left(  t\right)  =N_{i}\left(  t\right)  +\sum_{j:j\neq i,1\leq j\leq
d}D_{j,i}\left(  t\right)  -D_{i}\left(  t\right)  ,
\]
recall that $Q_{i,i}=0$ so we have that $D_{i,i}\left(  t\right)  =0$, and
thus we also can write $\sum_{j=1}^{d}D_{j,i}\left(  t\right)  $ in the
previous display. Then, one can verify that the (unique)\ solution to equation
(\ref{SDE}) is given by (see for instance, \cite{Harrison&Reiman})
\[
Y_{i}^{\prime}\left(  t\right)  =X_{i}\left(  t\right)  -\min_{0\leq s\leq
t}X_{i}\left(  s\right)  =\max_{0\leq s\leq t}\left(  X_{i}\left(  t\right)
-X_{i}\left(  s\right)  \right)  .
\]

\subsection{Coupling between $\mathcal{N}^{\prime}$ and $\mathcal{N}^{+}%
$\label{Section_Coupling}}

In order to describe the coupling between $\mathcal{N}^{\prime}$ and
$\mathcal{N}^{+}$, let us provide an interpretation of the SDE \eqref{SDE}
describing $\mathcal{N}^{\prime}$. The evolution of the $i$-th queue in
$\mathcal{N}^{\prime}$ can be seen as a single server queue with vacation
periods. Customers arrive according to the superposition of the processes
$N_{i}$ and $(D_{j,i}:1\leq j\leq d)$, the server takes a vacation whenever
the queue is empty with a distribution which is identical to that of a generic
service time. Arriving customers who find the queue empty must wait to be
served only until the current vacation epoch finishes.

The difference between $\mathcal{N}^{\prime}$ and $\mathcal{N}^{+}$ is that in
$\mathcal{N}^{+}$ no customers are \textquotedblleft
transferred\textquotedblright\ from station $i$ to $j$ at the end of a vacation epoch of server $i$.
Note that these types of transfers actually might occur in $\mathcal{N}%
^{\prime}$ because it could be the case, for instance, that $Y_{i}^{\prime
}\left(  t-\right)  =0$, $dD_{i}(t)=1$ and the corresponding $r_{i}^{\prime
}(k)=j$ so that $dD_{i,j}(t)=1$ and a new customer joins the queue at station
$j$. Consequently, in $\mathcal{N}^{\prime}$ there are two types of customers:
a) true customers, as those in $\mathcal{N}^{+}$, which are the ones that
correspond to external arrivals (i.e. arrivals from the processes $N_{i}$ for
$i\in\{1,...,d\}$), and their corresponding routes through the network, and b)
virtual customers, which does not exist in $\mathcal{N}^{+}$, are the ones
generated by empty stations that transfer customers to other stations by the
mechanism just described above. Therefore, to couple $\mathcal{N}^{\prime}$
and $\mathcal{N}^{+}$, we essentially need to distinguish between the true and
virtual customers in $\mathcal{N}^{\prime}$.

Recall that the evolution of $\mathcal{N}^{\prime}$ is fully described by the
process $N_{i}(\cdot)$, $D_{i}(\cdot)$ and $D_{ij}(\cdot)$, and $\mathcal{N}%
^{+}$ by the sequences $\{A_{i}(n)\}$, $\{r_{i}(k),\sigma_{i}^{0}(k)\}$ and
$\{v_{i}(k)\}$. To describe the coupling of $\mathcal{N}^{\prime}$ and
$\mathcal{N}^{+}$, we shall explain how to couple the pair of sequences.
Roughly speaking, the two systems will share the same external arrivals, and
each $V_{i}^{0}(k)$ (recall that $\{V_{i}^{0}(k)\}$ are the inter-renewal
times of $D_{i}(\cdot)$) corresponds to a service time $\sigma_{i}(k^{\prime
})$ when a customer is in service and to a vacation period $v_{i}(k^{\prime})$
otherwise. We provide the details next.

In our algorithm, we shall first simulate $\mathcal{N}^{\prime}$ on some
finite time interval $[T_{1},T_{2}]$, the corresponding processes $N_{i}(t)$,
$D_{i}(t)$ and $D_{ij}(t)$ on it, and sequences $\{A_{i}(n)\}$ and
$\{(V_{i}(k),r_{i}^{\prime}(k)\}$. Then, the number of customers $Y_{i}%
^{+}(t)$ of the coupled vacation system $\mathcal{N}^{+}$ evolves according to
the following SDE:
\begin{align}
\text{d}\widehat{Y}_{i}^{+}\left(  t\right)   &  =\text{d}N_{0,i}\left(
t\right)  +\sum_{j:j\neq i,1\leq j\leq d}I(S_{j}^{+}\left(  t_{-}\right)
>0)\text{d}D_{j,i}\left(  t\right)  -I(\widehat{Y}_{i}^{+}\left(
t_{-}\right)  >0)\text{d}D_{i}\left(  t\right)
,,\nonumber\label{Eq:Evolution_Vacation}\\
\text{d}\widehat{S}_{i}^{+}\left(  t\right)   &  =(I(\widehat{Y}_{i}%
^{+}\left(  t_{-}\right)  >0)-I(S_{i}^{+}\left(  t_{-}\right)  >0))\text{d}%
D_{i}\left(  t\right)  ,\\
Y_{i}^{+}(t)  &  =\widehat{Y}_{i}^{+}(t)+S_{i}^{+}(t).\nonumber
\end{align}
Here $S_{i}^{+}(t)\in\{0,1\}$ is the number of customer in service at station
$i$ at time $t$. In particular, we shall choose a special initial condition
for $\mathcal{N^{+}}$ according to the comparison results that we shall explain in
Section \ref{Sec:comparison}:
\begin{equation}
\widehat{Y}_{i}^{+}\left(  T_{1}\right)  =Y_{i}^{\prime}(T_{1}),\text{ }%
S_{i}^{+}\left(  T_{1}\right)  =1. \label{Eq:InitialCondition}%
\end{equation}
The remaining service time of the customer at station $i$ is the residual jump
time of $D_{i}(\cdot)$, i.e., $=B_{i}(D_{i}(T_{1})+1)-T_{1}$. Then, the
sequences of $(\sigma_{i}^{0}\left(  n\right)  $, $r_{i}\left(  n\right)
$)$_{n\geq1}$ can be extracted as follows.

\textbf{Procedure 0: Coupling of $\mathcal{N}^{\prime}$ and $\mathcal{N}^{+}%
$:}

\begin{enumerate}
[1)]

\item Input $N_{i}(t)$, $D_{i}(t)$ and $D_{i,j}(t)$ for $1\leq i,j\leq d$ and
$t\in[T_{1},T_{2}]$. Set $t_{i}=T_{1}$, $k^{s}_{i}=0$, $k^{v}_{i}=0$, and
$n_{i}=1$.

\item Compute $Y^{+}(t)$ and $S^{+}(t)$ according to
\eqref{Eq:Evolution_Vacation} and the initial condition \eqref{Eq:InitialCondition}.

\item For each $i$, while $t_{i}<T_{2}$, repeat the following:

\begin{itemize}
\item $t_{i}\leftarrow t_{i}+V^{0}_{i}(n)$;

\item If $S_{i}^{+}(t_{i}-)=1$, update $k^{s}_{i}\leftarrow k^{s}_{i}+1$ and
set $\sigma_{i}^{0}(k^{s}_{i})=V^{0}_{i}(n_{i})$ and $r_{i}(k^{s}_{i}%
)=r_{i}^{\prime}(n_{i})$. Otherwise, update $k^{v}_{i}\leftarrow k^{v}_{i}+1$
and set $v_{i}(k^{v}_{i})=V^{0}_{i}(n_{i})$.

\item $n_{i}\leftarrow n_{i}+1$.
\end{itemize}
\end{enumerate}


\begin{lem}
The extracted $(\sigma_{i}^{0}(k), r_{i}(k))$ form an i.i.d. sequence and
independent of the sequence $\{A_{i}(n)\}$.
\end{lem}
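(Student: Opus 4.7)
The plan is to recognize the extraction of $(\sigma_i^0(k),r_i(k))$ from the driving sequence $\{(V_i^0(n),r_i'(n))\}$ as a predictable thinning of an i.i.d.\ sequence, and to conclude by a standard thinning lemma for stopping times. Fix $i$, and let $\mathcal{H}_i$ denote the $\sigma$-algebra generated by the external arrival sequences $\{A_j(n):1\le j\le d,\,n\ge 1\}$ together with the primitives $\{(V_j^0(k),r_j'(k)):j\ne i,\,k\ge 1\}$. Set $W_n:=(V_i^0(n),r_i'(n))$. By the mutual independence of the primitives, $(W_n)_{n\ge 1}$ is i.i.d.\ and independent of $\mathcal{H}_i$; with the filtration $\mathcal{F}_n^i:=\mathcal{H}_i\vee\sigma(W_1,\ldots,W_n)$, the variable $W_n$ is independent of $\mathcal{F}_{n-1}^i$ with the common distribution.

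The key technical step is to show that the selection indicator
\[
\zeta_n:=I\bigl(S_i^+(t_i^{(n)}-)=1\bigr),\qquad t_i^{(n)}:=V_i^0(1)+\cdots+V_i^0(n),
\]
is $\mathcal{F}_{n-1}^i$-measurable. By \eqref{Eq:Evolution_Vacation}, $S_i^+$ changes only at jumps of $D_i$, so $S_i^+(t_i^{(n)}-)=I\bigl(\widehat{Y}_i^+(t_i^{(n-1)}-)>0\bigr)$. Unrolling \eqref{Eq:Evolution_Vacation} with the initial condition \eqref{Eq:InitialCondition}, the full path $(\widehat{Y}^+(s),S^+(s))_{s\le t_i^{(n-1)}}$ is a measurable functional of the external arrival epochs, of $(V_j^0(k),r_j'(k))$ for $j\ne i$ and $k\le D_j(t_i^{(n-1)})$, and of $(V_i^0(k),r_i'(k))$ for $k\le n-1$. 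The crucial non-circularity is that in $\mathcal{N}^+$ a transfer from station $i$ to station $j$ occurs only at a $D_i$-renewal with $S_i^+(t_-)>0$ (the guard in \eqref{Eq:Evolution_Vacation}); hence the $n$-th pair $(V_i^0(n),r_i'(n))$ cannot influence the joint state before $t_i^{(n)}$ itself. All three groups of variables listed above lie in $\mathcal{F}_{n-1}^i$, so $\zeta_n$ does as well. Consequently the extraction indices $\tau_i(k):=\inf\{n>\tau_i(k-1):\zeta_n=1\}$ (with $\tau_i(0):=0$) are predictable stopping times relative to $(\mathcal{F}_n^i)$; stability together with Assumption~1 guarantees $\tau_i(k)<\infty$ almost surely for every $k$.

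Finally, I would apply the predictable-thinning lemma for i.i.d.\ sequences: for any bounded measurable $g$ on $\mathcal{H}_i$ and bounded test functions $h_1,\ldots,h_K$, conditioning on $\mathcal{F}_{\tau_i(K)-1}^i$ and using that on $\{\tau_i(K)=n\}$ the variable $W_n$ is independent of $\mathcal{F}_{n-1}^i$ with law that of $W_1$ yields $E\bigl[h_K(W_{\tau_i(K)})\,\big|\,\mathcal{F}_{\tau_i(K)-1}^i\bigr]=E[h_K(W_1)]$; an induction on $K$ then gives
\[
E\bigl[g(\mathcal{H}_i)\,h_1(W_{\tau_i(1)})\cdots h_K(W_{\tau_i(K)})\bigr]=E[g(\mathcal{H}_i)]\prod_{\ell=1}^{K}E[h_\ell(W_1)].
\]
Specializing to $W_{\tau_i(k)}=(\sigma_i^0(k),r_i(k))$ and using $\mathcal{H}_i\supseteq\sigma(\{A_i(n)\})$ delivers both the i.i.d.\ property and independence from $\{A_i(n)\}$. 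I expect the measurability step to be the main obstacle: the joint evolution of $\mathcal{N}^+$ across stations a priori allows chronological recursions that might pull in $V_i^0$ at indices $\ge n$, so one has to verify carefully that such a dependence is impossible owing to the absence of virtual transfers in $\mathcal{N}^+$; once this is in hand the thinning lemma closes the argument.
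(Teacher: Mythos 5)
Your argument is correct in substance, and it is best read as a rigorous elaboration of the paper's proof rather than a copy of it: the paper disposes of the lemma in one sentence by appealing to the strong Markov property of the forward recurrence time processes of $N_i(\cdot)$, $D_i(\cdot)$, $D_{ij}(\cdot)$, whereas you make the mechanism explicit as a predictable thinning of the i.i.d.\ marked sequence $(V_i^0(n),r_i'(n))$. Your key step is sound: the service/vacation classification of the $n$-th activity equals $I(\widehat{Y}_i^+(t_i^{(n-1)}-)>0)$ (and equals $1$ for $n=1$ by \eqref{Eq:InitialCondition}), and the network state strictly before $t_i^{(n-1)}$ cannot involve $(V_i^0(m),r_i'(m))$ for $m\geq n$, since $V_i^0(n)$ only fixes the location of the $n$-th jump of $D_i$ and $r_i'(n)$ only acts at that jump; hence the selection indicator lies in $\mathcal{H}_i\vee\sigma(W_1,\ldots,W_{n-1})$ and the thinning identity delivers both the i.i.d.\ property and independence of the arrivals. (Your remark that the non-circularity rests on the absence of virtual transfers in $\mathcal{N}^+$ is not quite the point -- the same temporal-ordering argument works verbatim for $\mathcal{N}'$; what matters is only that the $n$-th pair influences nothing before $t_i^{(n)}$.) What your route buys is a checkable filtration/stopping-time argument; what the paper's phrasing is aimed at, and what your framing quietly assumes, is the decoupling at the boundary $T_1$: the initial data in \eqref{Eq:InitialCondition} (the queue lengths $Y'(T_1)$ and the residual activity times) are functionals of the history of the \emph{same} renewal processes up to $T_1$, and the interval of $D_i$ straddling $T_1$ must be booked as part of the initial state (its post-$T_1$ piece is a residual life, not a fresh copy of $\sigma_i^0$), with the extraction starting from the next full inter-renewal pair. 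Augmenting your $\mathcal{H}_i$ with this initial data and invoking the regeneration of the forward recurrence time process at the first renewal after $T_1$ -- exactly the property the paper cites -- is what legitimizes your hypothesis that $(W_n)_{n\geq1}$ is i.i.d.\ and independent of $\mathcal{H}_i$; with that supplement your proof is complete and, if anything, more informative than the paper's.
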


\begin{proof}
This follows from the strong Markov property of the forward recurrence time
processes of the renewal processes $N_{i}(\cdot)$, $D_{i}(\cdot)$ and
$D_{ij}(\cdot)$.
\end{proof}


\subsection{Comparison Results and Domination}

\label{Sec:comparison}

Now we have a full description of the three systems $\mathcal{N}^{0}$,
$\mathcal{N}^{+}$ and $\mathcal{N}^{\prime}$ that are coupled with the
original GJN $\mathcal{N}$, and their corresponding queue length processes.
The following theorem gives the comparison results among the four systems,
which are essential in our DCFTP algorithm. Its proof is given in the Appendix.

\begin{thm}
\label{Thm_Domination}Suppose that the networks $\mathcal{N}$, $\mathcal{N}%
^{0}$, $\mathcal{N}^{+}$, and $\mathcal{N}^{\prime}$ are all initially empty
and are coupled as described through Section 4.1 to 4.4, then the following holds:

i) For any $t>0$,%
\[
\sum_{i=1}^{d}Y_{i}(t)\leq\sum_{i=1}^{d}Y_{i}^{0}(t)\leq\sum_{i=1}^{d}%
Y_{i}^{+}(t).
\]

ii) Moreover, for any $t>0$, when $Y_{i}^{\prime}(t)=y_{i}$, then the service
station $i$ in system $\mathcal{N}^{+}$ must satisfy $Y_{i}^{+}(t)\leq
y_{i}+1$ and $S_{i}^{+}(t)\in\{0,1\}$.

iii)\ The network $\mathcal{N}^{+}$, driven by the SDE
(\ref{Eq:Evolution_Vacation}), is monotone in the initial condition. In other
words, $y_{i}\in\{0,1,...\}$ and if $Y^{++},$ $Y^{+},$ $Y^{+-}$ satisfy the
SDEs (\ref{Eq:Evolution_Vacation}) with initial conditions $\widehat{Y}%
_{i}^{++}\left(  0\right)  =y_{i}+1$, $\widehat{S}_{i}^{++}\left(  0\right)
=1$; $\widehat{Y}_{i}^{+}\left(  0\right)  \leq y_{i}$, $\widehat{S}_{i}%
^{+}\left(  0\right)  \in\{0,1\}$, and $\widehat{Y}_{i}^{+}\left(  0\right)
=0=\widehat{S}_{i}^{+}\left(  0\right)  $, then $Y_{i}^{++}(t)\geq Y_{i}%
^{+}(t)\geq Y_{i}^{+-}(t)$ for all $t\geq0$.
\end{thm}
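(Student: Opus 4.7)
For part (i) the plan is to prove the two chains of inequality by sample-path coupling driven by the shared primitives $(A_i,\sigma_i,r_i)$ (supplemented by $v_i^0$ in $\mathcal{N}^+$). The bound $\sum_i Y_i(t) \leq \sum_i Y_i^0(t)$ rests on $\sigma_i^0(k) = a_i\sigma_i(k) \geq \sigma_i(k)$: for each tagged external arrival one follows its trajectory through the two networks (routing is identical via the common $r_i(k)$) and observes that in $\mathcal{N}^0$ its departure time from the system is pointwise later than in $\mathcal{N}$, which, summed over external arrivals still present, dominates the total headcount. This is essentially the content of the sample-path comparison in \cite{Chang&etal}, which I would invoke rather than redo. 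The second bound $\sum_i Y_i^0(t) \leq \sum_i Y_i^+(t)$ is in the same spirit: $\mathcal{N}^+$ shares arrivals, service times and routing with $\mathcal{N}^0$ but interposes vacation periods whenever a server becomes idle, so every service completion in $\mathcal{N}^+$ occurs no earlier than the corresponding completion in $\mathcal{N}^0$. An induction over the merged sequence of events (external arrivals, service completions, vacation completions) maintains the domination.

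For part (ii) I would first observe that $S_i^+(t) \in \{0,1\}$ is preserved by its SDE because the increment $I(\widehat{Y}_i^+>0) - I(S_i^+>0)$ lies in $\{-1,0,1\}$ and cannot push $S_i^+$ outside $\{0,1\}$. Then the claim $Y_i^+(t) \leq y_i+1$ reduces to the componentwise bound $\widehat{Y}_i^+(t) \leq Y_i'(t)$, which I would prove by induction on the jump epochs of the shared driving processes $N_i, D_i, D_{j,i}$ appearing in both SDEs (\ref{SDE}) and (\ref{Eq:Evolution_Vacation}). Base case: both sides are zero. At an external arrival both sides grow by $1$. At a jump of $D_{j,i}$ with $j\neq i$, $Y_i'$ grows by $1$ while $\widehat{Y}_i^+$ grows by $I(S_j^+(t_-)>0) \in \{0,1\}$, so the gap only widens. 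At a jump of $D_i$, each side drops by its own indicator; a short case split on whether $\widehat{Y}_i^+(t_-)=0$ (and then comparing with $Y_i'(t_-)$) shows the inequality is preserved.

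For part (iii) I would couple the three copies $Y^{++},Y^+,Y^{+-}$ on the same primitives $(N_i,D_i,D_{j,i})$ with the prescribed initial data and show by joint induction on jumps that $\widehat{Y}_i^{++} \geq \widehat{Y}_i^+ \geq \widehat{Y}_i^{+-}$ and $S_i^{++} \geq S_i^+ \geq S_i^{+-}$ are simultaneously preserved. Arrivals from outside preserve order identically; arrivals from station $j$ carry the weight $I(S_j>0)$, which is monotone in $S_j$, so the induction on $S$ feeds back into the induction on $\widehat{Y}$. At a $D_i$ jump the only risky case is when the upper process is positive while the lower is zero; only the upper then drops, which cannot break the ordering, and the post-jump value $S_i = I(\widehat{Y}_i(t_-)>0)$ is likewise monotone in the pre-jump $\widehat{Y}_i$. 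The main obstacle, in my view, is part (i): slowing down service in $\mathcal{N}^0$ reshuffles the timing of internal routings relative to $\mathcal{N}$, so a naive tagged-customer argument bumps into the need to verify that the workload at downstream stations is also dominated; this is exactly what the framework of \cite{Chang&etal} handles, and it is where the bulk of the Appendix proof will live. Parts (ii) and (iii) are comparatively routine jump-by-jump inductions once the SDEs are written out.
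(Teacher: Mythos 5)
Your proposal is correct and, for parts (i) and (ii), takes essentially the paper's route: the paper disposes of part (i) by citing Lemma 4.2 of \cite{Chang&etal} (exactly the delegation you propose, including your correct diagnosis of why the naive tagged-customer argument needs that framework), and its proof of part (ii) (Lemma \ref{lemma: vacation system initial state}) is the same observation you make — the arrival input $\sum_{j\neq i}I(\widehat{S}_j^+(t_-)>0)\,\mathrm{d}D_{j,i}(t)$ feeding $\widehat{Y}_i^+$ is dominated by the input $\sum_{j\neq i}\mathrm{d}D_{j,i}(t)$ feeding $Y_i'$, so $\widehat{Y}_i^+(t)\leq Y_i'(t)$ and $Y_i^+(t)=\widehat{Y}_i^+(t)+S_i^+(t)\leq y_i+1$; whether one phrases this as monotonicity of the SDE in its input (the paper) or as your jump-by-jump induction is immaterial. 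Where you genuinely differ is part (iii). The paper's Lemma \ref{Lemma_Comp_Vacation_vs_Vacation} inducts on event epochs with a three-part invariant stated in terms of the totals $Y_i=\widehat{Y}_i+S_i$ (ordering of the $Y_i$, ``equal totals implies same activity type,'' ``upper server on vacation implies lower server on vacation'') and verifies it through a fairly long case analysis. Your invariant — the componentwise ordering of the pairs, $\widehat{Y}_i^{++}\geq\widehat{Y}_i^{+}\geq\widehat{Y}_i^{+-}$ and $S_i^{++}\geq S_i^{+}\geq S_i^{+-}$ — carries the same information (it is equivalent to the paper's three statements), but it is preserved by pure monotonicity of the update maps: at a $D_i$ jump, $\widehat{Y}_i\mapsto(\widehat{Y}_i-1)^{+}$, $S_i\mapsto I(\widehat{Y}_i(t_-)>0)$, and at the routing target $j\neq i$, $\widehat{Y}_j\mapsto\widehat{Y}_j+I(S_i(t_-)>0)$, each monotone in the pre-jump state; external arrivals add one to both copies. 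This collapses the paper's casework into a few one-line checks and is, in my view, the cleaner argument; what the paper's formulation buys is that its auxiliary statements about activity types are stated explicitly, which is how they get reused when initializing $\mathcal{N}^{++}$ in the Main Procedure. Your base-case verification from the stated initial conditions matches the paper's.
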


In the next section we explain how to use the previous result order to sample
from the stationary distribution of $\mathcal{N}$, i.e. the joint distribution of customer numbers at
each station, the remaining service requirement of the customers in service,
and the remaining times to the next external arrivals
to each station in steady state.

\section{Our Algorithm and Main Result}

Given the comparison results Theorem \ref{Thm_Domination}, we are now ready to
given the main procedure of our DCFTP algorithm. In the rest of the paper, for
any ergodic stochastic process $X(\cdot)$, we shall denote by $\bar{X}(\cdot)$
its two-sided stationary version.

\textbf{Main Procedure:}

\begin{enumerate}
\item Choose a constant $C_{T}>0$. Initialize $T\longleftarrow0$.

\item Simulate the system $\mathcal{N}^{\prime}$ in steady state and backwards in
time from $-T$ until $-T-C_{T}$. Obtain the corresponding processes $\bar
{N}_{i}(\cdot)$, $\bar{D}_{i}(\cdot)$, $\bar{D}_{ij}(\cdot)$ and $\bar
{Y}^{\prime}(\cdot)$ from $-T$ to $-T-C_{T}$. Update $T\longleftarrow T+C_{T}$.

\item Initialize a vacation system $\mathcal{N}^{++}$ at time $-T$ with
$Y_{i}^{++}(-T)=\bar{Y}_{i}^{\prime}(-T)+1$, all servers occupied ($S_{i}%
^{+}(-T)=1$) , and the corresponding remaining service time equals to the time
from $-T$ to the next jump time of process $\bar{D}_{i}(\cdot)$.

\item Compute $(Y_{i}^{++}(0):0\leq s\leq T)$, forward in time according to
\eqref{Eq:Evolution_Vacation} in Section \ref{Section_Coupling} and compute
the corresponding sequences $\{A_{i}(n)\}$, $\{r_{i}(k),\sigma^{0}_{i}(k)\}$
and $\{v^{0}_{i}(k)\}$ according to Procedure 0.

\item If there exists $\tau\in\lbrack0,T]$ such that $Y_{i}^{++}(\tau)=0$ for
all $i$, then we simulate a GJN $\mathcal{N}$ forward starting from $\tau<0$
to time $0$ with $Y_{i}(\tau)=0$ for all $i$ and driven by the sequence
$\{A_{i}(n)\}$ and $\{r_{i}(k),\sigma_{i}(k)\}$ where each $\sigma
_{i}(k)=\sigma^{0}_{i}(k)/a_{i}$. Output $Y_{i}(0)$ and terminate.

\item Otherwise, (if $Y^{+}(t)\neq0$\ for $t\in\lbrack0,T]$), go back to Step 2.
\end{enumerate}

The above procedure can be validated by the following heuristic. Suppose
$\bar{N}^{+}$ is the stationary vacation system coupled with $\bar{N}^{\prime
}$. Then, according to Part ii) and iii) of Theorem \ref{Thm_Domination}, its
queue length process $\bar{Y}_{i}^{+}(t)\leq Y_{i}^{++}(t)$ for all $i$ and
$t\in[-T,0]$. Therefore, we can conclude that $\bar{Y}_{i}(\tau)=0$ for all
$i$ and hence the coupled stationary GJN $\mathcal{N}$ must be empty at time
$\tau$ by Part i) of Theorem \ref{Thm_Domination}. Then, we can recover the
value of the stationary process $\bar{Y}_{i}(t)$ for $t\in[\tau, 0]$ and the
output $\bar{Y}_{i}(0)$ follows the steady-state distribution.

\begin{thm}
\label{Thm_Main}The state of the network given by the Main Procedure,
including $Y\left(  0\right)  $ and the remaining service times at each
station, follow the stationary distribution of the target GJN. Moreover, let
$N$ be the total number of random variables to terminate the Main Procedure,
then there is $\delta>0$ such that $E\exp\left(  \delta N\right)  <\infty$.
\end{thm}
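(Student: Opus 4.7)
The plan is to split the proof into correctness of the sampled output and the exponential moment bound on $N$, handling each separately.

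\emph{Correctness.} Run the algorithm on the canonical two-sided stationary coupling $(\bar{\mathcal{N}}, \bar{\mathcal{N}}^{0}, \bar{\mathcal{N}}^{+}, \bar{\mathcal{N}}^{\prime})$ obtained as the backward Loynes limit of the four networks started empty at time $-s$ as $s \to \infty$; stability of each system together with Assumption~1 ensures this limit is well defined. At each iteration the algorithm draws $\bar{\mathcal{N}}^{\prime}$ on $[-T, 0]$, initializes a vacation system $\mathcal{N}^{++}$ at time $-T$ by $(\widehat{Y}_{i}^{++}, \widehat{S}_{i}^{++})(-T) = (\bar{Y}_{i}^{\prime}(-T)+1, 1)$, and looks for $\tau \in [-T, 0]$ with $Y^{++}(\tau) = 0$ componentwise. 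By Part~ii of Theorem~\ref{Thm_Domination}, the stationary vacation system satisfies $\bar{Y}_{i}^{+}(-T) \leq \bar{Y}_{i}^{\prime}(-T) + 1$ with $\bar{S}_{i}^{+}(-T) \in \{0,1\}$, and Part~iii then gives $\bar{Y}_{i}^{+}(t) \leq Y_{i}^{++}(t)$ for all $i$ and $t \in [-T, 0]$. Hence $Y^{++}(\tau) = 0$ forces $\bar{Y}^{+}(\tau) = 0$, and Part~i yields $\bar{Y}(\tau) = 0$. By the construction in Procedure~0 the sequences $\{A_i(n)\}$ and $\{r_i(k), \sigma_i(k)\}$ extracted from the coupling are i.i.d.\ and distributed as the drivers of the original GJN, so the forward simulation in Step~5, which starts $\mathcal{N}$ empty at $\tau$, agrees with $\bar{\mathcal{N}}$ on $[\tau, 0]$ and outputs $\bar{Y}(0)$.

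\emph{Reduction of the work to the horizon.} Let $T^{*}$ denote the random horizon at which the algorithm succeeds and $\tau^{*} \in [-T^{*}, 0]$ the coalescence time found. The variables used decompose into three groups: (i) those used by Algorithm~\ref{algo last} to sample $\bar{N}_i, \bar{D}_i, \bar{D}_{i,j}$ and $\bar{Y}^{\prime}$ backward on $[-T^{*}, 0]$; (ii) those used to run $\mathcal{N}^{++}$ forward on the same interval via Procedure~0; (iii) those used to simulate $\mathcal{N}$ forward on $[\tau^{*}, 0]$. By Assumption~2 the renewal processes $N_i, D_i, D_{i,j}$ generate, per unit of time, a number of events with finite MGF, so each of groups (ii) and (iii) contributes at most $c_0 + c_1 T^{*}$ variables in the MGF sense. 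The work in group (i) is $c_0 + c_1 T^{*} + M_0$ where $M_0$ collects the overhead of the stationary sampler; by the construction of Algorithm~\ref{algo last} and the MGF hypotheses of Assumptions~2--3, $M_0$ has a finite exponential moment independently of $T^{*}$. It therefore suffices to show that $T^{*}$ has a finite exponential moment.

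\emph{Exponential moment of $T^{*}$ (the main obstacle).} I would bound $T^{*}$ by the first backward time before $0$ at which $\bar{Y}^{+}$ is empty, plus an independent busy period of $\bar{\mathcal{N}}^{+}$ driven by the single extra customer in $\widehat{Y}^{++}(-T) = \bar{Y}^{\prime}(-T) + 1$: by monotonicity (Part~iii), once the stationary vacation system has emptied, $Y^{++}$ needs only to absorb this additional charge before coupling. Both ingredients are governed by busy cycles of the vacation system. Each coordinate process $X_i$ defined in Section~3.3 has i.i.d.\ renewal increments with finite MGF (by Assumption~2 at $t = 0$) and strictly negative drift $\phi_i - \mu_i/a_i < 0$ by \eqref{Eq:Stability_Bound}, so a Cram\'er-type estimate gives geometric tails for the componentwise running maxima and hence for the stationary autonomous queue $\bar{Y}^{\prime}$. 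The uniform conditional-excess MGF bound in \eqref{Eq:AS_2} extends exponential integrability to the residual interarrival and service-time components, and a Lyapunov-function argument in the spirit of Gamarnik--Zeevi applied to the pair $(\bar{Y}^{+}, Y^{++})$ then yields $E\exp(\delta_0 T^{*}) < \infty$ for some $\delta_0 > 0$. Combining with the per-unit-time exponential moments of the work in groups (i)--(iii) gives $E\exp(\delta N) < \infty$ for all sufficiently small $\delta > 0$, which completes the proof.
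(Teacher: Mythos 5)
Your overall strategy (domination plus DCFTP plus a Gamarnik--Zeevi type bound for the coalescence horizon) is in the right spirit, but the correctness half has a genuine gap. You ground everything on "the canonical two-sided stationary coupling $(\bar{\mathcal{N}},\bar{\mathcal{N}}^{0},\bar{\mathcal{N}}^{+},\bar{\mathcal{N}}^{\prime})$ obtained as the backward Loynes limit of the four networks started empty," asserting that stability and Assumption~1 make this limit well defined. Loynes' construction requires monotonicity in the initial condition, and the paper stresses that GJNs are precisely \emph{not} monotone in their initial condition; so the existence of this coupled stationary quadruple (and the applicability of Theorem~\ref{Thm_Domination}, whose parts are stated for systems started empty and coupled as in Section~3, to these putative stationary versions) is exactly what needs proof, not a starting point. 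What you have written is essentially the heuristic the paper states right after the Main Procedure. The paper's actual argument avoids the issue: once the dominating process $Y^{++}$ is empty at $\tau$, Theorem~\ref{Thm_Domination} forces the GJN started empty at $-T$ to be empty at $\tau$ for every $T$ beyond the coalescence horizon $T''$, hence $Y(T;T,0)=Y(T'';T'',0)$ for all $T>T''$; the common value is $\lim_{T\to\infty}Y(T;T,0)$, which follows the stationary law by uniqueness of the stationary distribution (Sigman). Your proof needs either this forward-limit/uniqueness argument or an honest construction of the stationary coupled quadruple by some non-Loynes route.

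On the exponential moment, your sketch also skips the step that actually certifies coalescence. A Cram\'er/Lyapunov argument in the spirit of Gamarnik--Zeevi (using the uniformity in \eqref{Eq:AS_2}) gives exponential moments for the return time of the \emph{total} population of the dominating vacation system to a bounded set, i.e.\ for $\tau(c)=\inf\{n:\sum_i\Theta_i^+(n)\le c\}$; but the algorithm terminates only when \emph{all} stations are simultaneously empty, and passing from $\sum_i\Theta_i^+\le c$ to complete emptiness requires Assumption~1 (unbounded interarrival support) together with a geometric-trials argument — a step absent from your proposal, and the only place Assumption~1 enters the cost bound in the paper. Your alternative decomposition of $T^{*}$ as "first backward emptying time of $\bar{Y}^{+}$ plus an independent busy period" is also unsupported: the two pieces are not independent, and $\bar{Y}^{+}$ is the stationary object whose construction was already in question. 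The bookkeeping reducing $N$ to $T^{*}$ (finitely many events per unit time plus the overhead of Algorithm~\ref{algo last}, which has exponential moments by Proposition~\ref{Prop_Mo_and_upto_Delta}) is fine in outline, but the two gaps above are the substance of the paper's proof.
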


Step 3 through Step 5 in the Main Procedure can be done according to the
coupling mechanism described in Section 3.1, 3.2, 3.4, and in particular,
Procedure 0. The most difficult part is the execution of Step 2 and we shall
explain this in Section \ref{Section_Step_2}. The proof, which is given at the
end of Algorithm \ref{algo last}, in Section \ref{Sec_Proof_Main}, mainly
constitutes a recapitulation of our development.

\section{Execution of Step 2 in Main Procedure: Stationary Construction and
Backward Simulation of $\mathcal{\bar{N}}^{\prime}$\label{Section_Step_2}}

This section is devoted to explain how to execute Step 2 in Main Procedure,
that is, to simulate a stationary version of $Y^{\prime}$ backwards in time.
We shall explain this simulation procedure in three steps. In Section
\ref{Section_Step_2}.1, we show a stationary version of $Y^{\prime}$ can be
expressed by a multi-dimensional point process and its maximum. Then, we show
the to simulate the point process and its maximum can be reduced to simulating
several random walks jointly with their maximum. In the end, in Section
\ref{Section_Step_2}.3, we explain how to simulate the random walks and their
maximum, following the ideas in \cite{Blanchet&Chen}.



\subsection{Express $Y^{\prime}$ by Point Processes}

For each $i$, we define $\bar{N}_{0,i}\left(  \cdot\right)  $ as a two-sided,
time stationary, renewal point process with inter-arrival time distribution
being i.i.d. copies of $A_{i}\left(  n+1\right)  -A_{i}\left(  n\right)  $. We
write $\{\bar{A}_{i}\left(  n\right)  :n\in\mathbb{N}_{0}\mathbb{\cup(-N)\}}$
for the arrival times associated to $\bar{N}_{0,i}\left(  \cdot\right)  $, so
that $\bar{A}\left(  -1\right)  <0<\bar{A}\left(  0\right)  <\bar{A}\left(
1\right)  $ and define
\[
\bar{N}_{0,i}\left(  [a,b]\right)  =\sum_{n}I\left(  \bar{A}\left(  n\right)
\in\lbrack a,b]\right)  ,
\]
for any $a,b\in\left(  -\infty,\infty\right)  $.

Similarly, we let $\bar{D}_{i}\left(  \cdot\right)  $ to be a two-sided, time-stationary version of $D_{i}\left(  \cdot\right)  $ and write $\{\bar{B}%
_{i}\left(  n\right)  :n\in\mathbb{N}_{0}\mathbb{\cup(-N)\}}$ for the arrival
times associated to $\bar{D}_{i}\left(  \cdot\right)  $ also in increasing
order and so that $\bar{D}_{i}\left(  -1\right)  <0<\bar{D}_{i}\left(
0\right)  <\bar{D}_{i}\left(  1\right)  $. As before,
\[
\bar{D}_{i}\left(  [a,b]\right)  =\sum_{n}I\left(  \bar{B}_{i}\left(
n\right)  \in\lbrack a,b]\right)  .
\]
Each $\bar{B}_{i}\left(  n\right)  $ is attached to a mark $\bar{r}%
_{i}^{\prime}\left(  n\right)  $ which are i.i.d. copies of the $r_{i}%
^{\prime}\left(  n\right)  $'s. All the $\bar{A}_{i}\left(  n\right)  $'s, the
$\bar{D}_{i}\left(  n\right)  $'s, and the $r_{i}^{\prime}\left(  n\right)
$'s are mutually independent. Finally, for any $a, b \in (-\infty,\infty)$, define
\[
\bar{D}_{i,j}\left(  [a,b]\right)  =\sum_{n}I\left(  \bar{B}_{i}\left(
n\right)  \in\lbrack a,b],\bar{r}_{i}^{\prime}\left(  n\right)  =j\right)  .
\]

Intuitively, $N_{0,i}(\cdot)$ describes the external arrivals to station $i$,
$D_{i,0}(\cdot)$ describes the potential departures from station $i$, and
$D_{ij}(\cdot)$ describes the potential internal routings from station $i$ to
$j$. For all $t\geq0$, we define
\begin{align}
\bar{N}_{0,i}\left(  t\right)   &  =\bar{N}_{0,i}\left(  [0,t]\right)  ,\text{
}\bar{N}_{0,i}\left(  -t\right)  =-\bar{N}_{0,i}\left(  [-t,0)\right)
,\label{Eq:Two_Sided_Processes}\\
\bar{D}_{i}\left(  t\right)   &  =\bar{D}_{i}\left(  [0,t]\right)  ,\text{
}\bar{D}_{i}\left(  -t\right)  =-\bar{D}_{i}\left(  [-t,0)\right)
,\nonumber\\
\bar{D}_{i,j}\left(  t\right)   &  =\bar{D}_{i,j}\left(  [0,t]\right)  ,\text{
}\bar{D}_{i,j}\left(  -t\right)  =-\bar{D}_{i,j}\left(  [-t,0)\right)
.\nonumber
\end{align}
and
\[
\bar{X}_{i}\left(  t\right)  =\bar{N}_{0,i}\left(  t\right)  +\sum_{j:j\neq
i,1\leq j\leq d}\bar{D}_{j,i}\left(  t\right)  -\bar{D}_{i}\left(  t\right)
.
\]
Then, $\bar{X}_{i}\left(  t\right)  $ is a two-sided stationary process.
Finally put for $t\leq0$,
\begin{equation}
\bar{Y}^{\prime}(-t)=-\bar{X}(t)+\sup_{s\geq t}\bar{X}(s).
\label{Stat_version}%
\end{equation}
Observe that the for any deterministic time $T<0$, the process process
$\{\bar{Y}_{i}^{\prime}\left(  T+t\right)  -\bar{Y}_{i}^{\prime}\left(
T\right)  :0\leq t\leq\left\vert T\right\vert \}$ satisfies the SDE
(\ref{SDE}) only replacing the renewal processes with their respective
stationary versions. We just need to show that $Y^{\prime}$ has a unique
stationary distribution which is the same as the distribution of $\bar
{Y}^{\prime}\left(  0\right)  $ and thus we have that $\bar{Y}^{\prime}$ is
the time-reversed, stationary version of $Y^{\prime}$.

\begin{lem}
\label{Lemma_Lyones_Auto}The autonomous queue $Y^{\prime}(\cdot)$ has a unique
stationary distribution and therefore $\{\bar{Y}^{\prime}(-t):t\geq0\}$ given
by (\ref{Stat_version}) is the time-reversed, stationary version of
$Y^{\prime}$.
\end{lem}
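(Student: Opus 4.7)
The plan is to combine three ingredients: (i) a negative-drift computation that makes the construction \eqref{Stat_version} well-defined, (ii) a Loynes-type monotone coupling that produces the unique stationary law from an empty start, and (iii) a time-reversal identity for stationary renewal processes that identifies the Loynes limit with the marginal of $\bar{Y}'(0)$.

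First I would verify the drift. Because $\bar{N}_{0,i}$, $\bar{D}_j$, and the thinnings $\bar{D}_{j,i}$ are stationary point processes with rates $\lambda_i$, $\mu^0_j$, and $Q_{j,i}\mu^0_j$ respectively, the strong law of large numbers yields
\[
\frac{\bar{X}_i(t)}{t} \longrightarrow \lambda_i + \sum_{j:\,j\neq i} Q_{j,i}\mu^0_j - \mu^0_i = -\bigl((I-Q^T)\mu^0 - \lambda\bigr)_i < 0 \quad \text{a.s.}
\]
by the choice of $\mu^0$ in \eqref{Eq:Stability_Bound}. Hence $\bar{X}_i(s)\to-\infty$ as $s\to\infty$ and $\sup_{s\geq t}\bar{X}_i(s)<\infty$ a.s.\ for all $t$, so \eqref{Stat_version} defines a finite-valued process; moreover, because the map $t\mapsto \sup_{s\geq t}(\bar{X}(s)-\bar{X}(t))$ is a shift-invariant measurable functional of the stationary process $\bar{X}$, the resulting $\bar{Y}'$ is strictly stationary. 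The two-sided Skorokhod reflection representation, together with the observation that the running future-maximum increases only at epochs where $\bar{Y}'_i=0$, then shows that $\bar{Y}'$ satisfies the stationary two-sided analogue of the SDE \eqref{SDE} driven by $\bar{N}$, $\bar{D}$, and $\bar{D}_{j,i}$.

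Next I would prove uniqueness via a Loynes coupling. If $Y'$ is started empty at $-T$ and driven by the restrictions of $\bar{N}$, $\bar{D}$, $\bar{D}_{j,i}$ to $[-T,0]$, the reflection formula gives $Y'_i(0)=\sup_{-T\leq s\leq 0}(\bar{X}_i(0)-\bar{X}_i(s))$, which is nondecreasing in $T$. By the negative drift applied in reverse time, this supremum converges a.s.\ as $T\to\infty$ to a finite limit
\[
Y'^{\infty}_i \,:=\, \sup_{s\leq 0}\bigl(\bar{X}_i(0)-\bar{X}_i(s)\bigr).
\]
A standard monotone-sandwich argument driven by the same $\bar{X}$ shows that $Y'(t)$ started from any fixed state converges in distribution to $Y'^{\infty}$, so $Y'$ admits exactly one stationary law, namely that of $Y'^{\infty}$.

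Finally I would identify the two stationary representations. A stationary renewal process is distributionally invariant under time reversal (both orderings produce i.i.d.\ inter-atom gaps with the same size-biased interval straddling the origin), and because the routing marks $\bar{r}'_i(k)$ are i.i.d.\ and independent of everything else, this symmetry lifts jointly to the vector $(\bar{N}_{0,i},\bar{D}_i,\bar{D}_{j,i})$ and hence to the identity in distribution $\{\bar{X}_i(0)-\bar{X}_i(-t):t\geq 0\}\stackrel{d}{=}\{\bar{X}_i(t)-\bar{X}_i(0):t\geq 0\}$ as processes. Taking componentwise suprema yields $Y'^{\infty}\stackrel{d}{=}\bar{Y}'(0)$, so the unique stationary distribution of $Y'$ coincides with the marginal of $\bar{Y}'(0)$, and $\{\bar{Y}'(-t):t\geq 0\}$ is therefore the time-reversed stationary version of $Y'$. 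The main obstacle will be this last identification: the paper's definition \eqref{Stat_version} uses a \emph{future} supremum, while the natural Loynes construction produces a \emph{past} supremum, and reconciling them requires the distributional time-reversibility of stationary renewal superpositions together with the independence of the routing marks; a secondary technical point is checking that the two-sided process defined by \eqref{Stat_version} actually solves the reflected dynamics globally in $t$, which requires working out the jump structure of $\sup_{s\geq t}\bar{X}(s)$ at its regeneration epochs $\{t:\bar{Y}'_i(-t)=0\}$.
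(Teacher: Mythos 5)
Your proposal is correct, and it follows the same Loynes-type strategy as the paper, but the execution of the key identification step is genuinely different. The paper runs the forward system from time $0$ with an arbitrary initial state $y$, writes the one-dimensional reflection formula explicitly as $Y_i'(T)=\sup_{0\leq u\leq T}\max\bigl(X_i(T)-X_i(T-u),\,y_i+X_i(T)\bigr)$, kills the initial-condition term via $X_i(T)\to-\infty$, and identifies the limit with $\bar{Y}_i'(0)=\sup_{u\geq0}\bar{X}_i(u)$ through the asserted weak convergence of the reversed increments $X_i(T)-X_i(T-u)\Rightarrow\bar{X}_i(u)$; all of the time-reversal content is folded into that single weak-convergence claim. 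You instead run Loynes backwards with the two-sided stationary inputs, which buys almost sure monotone convergence to the past supremum $\sup_{s\leq0}(\bar{X}_i(0)-\bar{X}_i(s))$, and then you must supply an extra ingredient the paper never states explicitly: the exact distributional reversibility of stationary renewal processes with i.i.d.\ marks, lifted jointly across the independent streams, to convert the past supremum into the future supremum appearing in \eqref{Stat_version}. That reversibility lemma is true (reversing a stationary renewal process preserves its law, and the i.i.d.\ routing marks are exchangeable under reindexing), so your identification is sound; it makes explicit the renewal-theoretic fact that the paper leaves implicit, at the cost of an additional lemma. Your treatment of arbitrary initial states via the "monotone-sandwich" is really the identity $Y^y_i(t)=\max\bigl(Y^0_i(t),\,y_i+X_i(t)\bigr)$, i.e.\ exactly the term the paper carries explicitly, so that step is equivalent rather than new; if you write it out, note that uniqueness against a possibly random stationary initial condition needs the usual tightness remark, a point the paper also glosses over. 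Net comparison: the paper's route is shorter because one formula simultaneously handles the initial condition and the limit identification, while yours separates concerns, replaces a weak-limit assertion by an a.s.\ limit plus an exact equality in law, and makes visible exactly where stationary-renewal reversibility is used.
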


\textit{Proof of Lemma \ref{Lemma_Lyones_Auto}}: We proceed with a
construction procedure similar to the Loynes method. For $t\in\lbrack0,T]$ and any
$y\in\mathbb{R}^{d}$ define
\begin{align*}
\text{d}Y_{i}^{\prime}\left(  t\right)   &  =\text{d}N_{0,i}\left(  t\right)
+\sum_{j:j\neq i,1\leq j\leq d}\text{d}D_{j,i}\left(  t\right)  -I\left(
Y_{i}^{\prime}\left(  t_{-}\right)  >0\right)  \text{d}D_{i}\left(  t\right)
,\\
Y_{i}^{\prime}\left(  0\right)   &  =y.
\end{align*}
We then have that%
\[
Y_{i}^{\prime}(t)=(y_{i}+X_{i}(t))-\inf_{0\leq s\leq t}\min(y_{i}+X_{i}(s),0)
\]
and therefore%
\begin{align*}
Y_{i}^{\prime}(T)  &  =X_{i}(T)-\inf_{0\leq s\leq T}\min(X_{i}(s),-y_{i})\\
&  =-\inf_{0\leq s\leq T}\{\min(X_{i}(s),-y_{i})-X_{i}\left(  T\right)  \}\\
&  =\sup_{0\leq s\leq T}\{\max(X_{i}\left(  T\right)  -X_{i}\left(  s\right)
,y_{i}+X_{i}\left(  T\right)  )\}\\
&  =\sup_{0\leq u\leq T}\{\max(X_{i}\left(  T\right)  -X_{i}\left(
T-u\right)  ,y_{i}+X_{i}\left(  T\right)  )\}.
\end{align*}
As $T\rightarrow\infty$ we have that $X_{i}\left(  T\right)  \rightarrow
-\infty$ and $X_{i}\left(  T\right)  -X_{i}\left(  T-u\right)  \Rightarrow$
$\bar{X}_{i}\left(  u\right)  $ as $T\rightarrow\infty$ (weakly)\ and
therefore $Y_{i}^{\prime}(T)\Rightarrow\bar{Y}_{i}^{\prime}\left(  0\right)  $
regardless of the initial condition.\hfill$\Box$

Given the time-reversed, stationary version of $Y^{\prime}$, it suffices to
simulate
\[
\bar{X}_{i}^{\ast}\left(  t\right)  =\sup_{r\geq t}\bar{X}_{i}\left(
r\right)  ,
\]
jointly with $\bar{X}_{i}\left(  t\right)  $ for all $i\in\{1,...,d\}$.

\subsection{Connection between $\bar{X}_{i}^{\ast}\left(  t\right)  $ and
Associated Random Walks}

We note that $E[\bar{X}_{i}(1)]<1$
due to (\ref{Eq:Stability_Bound}), therefore, $\bar{X}_{i}\left(  t\right)
\rightarrow-\infty$ as $t\rightarrow\infty$. Note that
\[
X_{i}^{\ast}\left(  t\right)  =\max(\sup\{X_{i}\left(  r\right)  :t\leq r\leq
u\},X_{i}^{\ast}\left(  u\right)  ).
\]
To construct a bound for $\bar{X}^{*}_{i}(\cdot)$, we will construct a
non-increasing process $Z_{i}\left(  \cdot\right)  $, such that $Z_{i}\left(
u\right)  \geq$ $X_{i}^{\ast}\left(  u\right)  $ and $Z_{i}\left(  u\right)
\rightarrow-\infty$ with probability one. Since $\sup\{X_{i}\left(  r\right)
:t\leq r\leq u\}$ is clearly non-decreasing in $u$, our ability to simulate
$Z_{i}\left(  u\right)  $ will allow us to sample $X^{\ast}\left(  t\right)  $
in finite time.

\subsubsection{Construction of the Upper Bound $Z_{i}\left(  \cdot\right)  $}

We now give the definition of $Z_{i}\left(  \cdot\right)  $. Following
(\ref{Eq:Stability_Bound}), we can pick $\bar{\delta}>0$ small enough so that
\[
\lambda_{i}+\sum_{j=1}^{d}Q_{j,i}\mu_{j}^{0}+\bar{\delta}\left(  1+\sum
_{j=1}^{d}Q_{j,i}\right)  <\mu_{i}^{0}.
\]
Next we define $\gamma_{i}=\lambda_{i}+\bar{\delta}$, $\varphi_{j,i}%
=Q_{j,i}(\mu_{j}+\bar{\delta})$ and $\beta_{i}=\gamma_{i}+\sum_{j=1}%
^{d}\varphi_{j,i}$, and split%

\[
\bar{X}_{i}\left(  t\right)  =\left(  \bar{N}_{0,i}\left(  t\right)
-\gamma_{i}t\right)  +\sum_{j=1}^{d}\left(  \bar{D}_{j,i}\left(  t\right)
-\varphi_{j,i}t\right)  +(\beta_{i}t-\bar{D}_{i}\left(  t\right)  )
\]
so that
\[
\bar{X}_{i}^{\ast}\left(  t\right)  \leq\sup_{r\geq t}\left(  \bar{N}%
_{0,i}\left(  t\right)  -\gamma_{i}t\right)  +\sum_{j=1}^{d}\sup_{r\geq
t}\left(  \bar{D}_{j,i}\left(  t\right)  -\varphi_{j,i}t\right)  +\sup_{r\geq
t}(\beta_{i}t-\bar{D}_{i}\left(  t\right)  ).
\]
Finally, we define three non-increasing processes as
\begin{align*}
\bar{N}_{0,i}^{\ast}\left(  t\right)   &  =\sup_{r\geq t}\left(  \bar{N}%
_{0,i}\left(  r\right)  -\gamma_{i}r\right)  ,\\
\bar{D}_{j,i}^{\ast}\left(  t\right)   &  =\sup_{r\geq t}\left(  \bar{D}%
_{j,i}\left(  r\right)  -\varphi_{j,i}r\right)  ,\\
\bar{D}_{i}^{\ast}\left(  t\right)   &  =\sup_{r\geq t}(\beta_{i}r-\bar{D}%
_{i}\left(  r\right)  ),
\end{align*}
for all $t\geq0$. Observe that by the selection of $\beta_{i}$, $\varphi
_{j,i}$, and $\gamma_{i}$, all the three processes just defined are
non-increasing and go to minus infinity with probability 1. As a result,
\[
Z_{i}\left(  t\right)  :=\bar{N}_{0,i}^{\ast}\left(  t\right)  +\sum_{j=1}%
^{d}\bar{D}_{j,i}^{\ast}\left(  t\right)  +\bar{D}_{i}^{\ast}\left(  t\right)
\text{ is non-increasing and goes to }-\infty\text{ as }t\to\infty.
\]

Now we explain how to simulate jointly
\[
(\bar{N}_{0,i}^{\ast}\left(  t\right)  ,\bar{N}_{0,i}\left(  t\right)
,\bar{D}_{j,i}^{\ast}\left(  t\right)  ,\bar{D}_{j,i}\left(  t\right)
,\bar{D}_{i}^{\ast}\left(  t\right)  ,\bar{D}_{i}\left(  t\right)
:i,j\in\{1,...,d\}).
\]

\subsubsection{Transforming the Simulation of $(Z\left(  t\right)  :t\geq0)$
into that of the Maximum of a Multidimensional Random Walk}

Note that $\bar{N}_{0i}(\cdot)$ is piecewise linear with jumps, therefore it
reaches its maximum only at (or right before) the times $\{\bar{A}_{i}(n)\}$
when it jumps. So are $D_{i}(\cdot)$ and $D_{ij}(\cdot)$. These results are
formalized by the following lemma:

\begin{lem}
\label{Lem_Aux_Eval}For $t\geq0$ and assuming that $Q_{i,j}>0$ in the case of
$\bar{D}_{j,i}^{\ast}\left(  t\right)  $, we have that
\begin{align*}
\bar{N}_{0,i}^{\ast}\left(  t\right)   &  =\max\left(  \bar{N}_{0,i}%
(t)-\gamma_{i}t, \sup_{n>\bar{N}_{0,i}\left(  t\right)  }\left(  n-\gamma
_{i}\bar{A}_{i}\left(  n\right)  \right)  +1 \right)  ,\\
\bar{D}_{i}^{\ast}\left(  t\right)   &  =\sup_{n>\bar{D}_{i}( t) }%
\Big(\beta_{i}\bar{B}_{i}\left(  n\right)  -n\Big),\\
\bar{D}_{j,i}^{\ast}\left(  t\right)   &  =\max\left(  \bar{D} _{j,i}\left(
t\right)  -\varphi_{j,i}t,\sup_{n>\bar{D}_{j}( t) }\left(  \sum_{k=1}^{n}I(
r_{j}^{\prime}\left(  k\right)  =i) -\varphi_{j,i}\bar{B}_{j}\left(  n\right)
\right)  +1 \right)  .
\end{align*}

\end{lem}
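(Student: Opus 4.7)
The plan is to exploit the piecewise-linear structure of each of the three processes whose running supremum we want to compute. In each case the process jumps only at arrival epochs of an underlying renewal point process and is strictly monotone between consecutive jumps; hence on every inter-jump interval the supremum is attained at an endpoint (or approached as a one-sided limit), and the supremum over $[t,\infty)$ reduces to a discrete maximum over one candidate value per interval, together with a boundary contribution from the initial fragment $[t,\text{next jump})$.

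For $\bar N_{0,i}^*(t)$ I would first observe that $r \mapsto \bar N_{0,i}(r) - \gamma_i r$ rises by one at each external arrival $\bar A_i(n)$ and decreases linearly with slope $-\gamma_i$ in between. Hence on $[\bar A_i(n),\bar A_i(n+1))$ with $n > \bar N_{0,i}(t)$ the supremum is attained at the left endpoint, right after the jump, with value $n - \gamma_i \bar A_i(n)$, while on the initial segment $[t,\bar A_i(\bar N_{0,i}(t)+1))$ the sup is attained at $r=t$ with value $\bar N_{0,i}(t) - \gamma_i t$; taking the max of these two sources of candidates yields the first identity. The third identity follows from exactly the same argument applied to the thinned point process $\bar D_{j,i}$, whose jumps form the subsequence of $\{\bar B_j(n)\}$ selected by the marks $\bar r_j'(n) = i$; the hypothesis $Q_{i,j} > 0$ ensures that this subsequence is almost surely infinite, so the sup over $n > \bar D_j(t)$ is well defined and tends to $-\infty$ by the choice of $\varphi_{j,i}$.

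For $\bar D_i^*(t)$ the direction of monotonicity flips: $r \mapsto \beta_i r - \bar D_i(r)$ increases linearly with slope $\beta_i$ between jumps of $\bar D_i$ and drops by one at each $\bar B_i(n)$. On each interval $[\bar B_i(n-1),\bar B_i(n))$ the function is strictly increasing and its supremum is attained only as a one-sided limit as $r \to \bar B_i(n)^-$; this is why the second identity is written as a $\sup$ rather than a $\max$, and why no boundary term at $r=t$ is needed, since the value at $r=t$ is strictly dominated by the left-limit at the next jump $\bar B_i(\bar D_i(t)+1)$. Collecting the left-limit contributions for all $n > \bar D_i(t)$ produces the stated expression. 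The main obstacle, to the extent there is one, is the careful bookkeeping around right-continuity of the renewal counting processes and about which side of each jump the supremum is attained on; once those conventions are pinned down, all three identities are immediate consequences of the piecewise-monotone decomposition.
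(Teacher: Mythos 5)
Your proposal is correct and follows essentially the same route as the paper's proof: exploit the piecewise-linear structure, reduce the supremum over $[t,\infty)$ to candidate values at jump epochs (left endpoints, plus the boundary value at $r=t$, for $\bar{N}_{0,i}$ and $\bar{D}_{j,i}$; left limits at jump times, with no boundary term, for $\bar{D}_{i}$), exactly as in the paper. The only caveat is a bookkeeping detail you gloss over: with the paper's convention the value just after the jump at $\bar{A}_{i}(n)$ is $n+1-\gamma_{i}\bar{A}_{i}(n)$, which is precisely the source of the ``$+1$'' in the first and third identities.
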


\textit{Proof of Lemma }\ref{Lem_Aux_Eval}: By definition, for any $r\geq0$
such that $\bar{A}_{i}(k)\leq r<\bar{A}_{i}(k+1)$, $\bar{N}_{0,i}(r)=k+1$. As
a result,
\[
\max_{\bar{A}_{i}(k)\leq r<\bar{A}_{i}(k+1)}\left(  \bar{N}_{0,i}%
(r)-\gamma_{i}r\right)  =k+1-\bar{A}_{i}(k),
\]
and the maximum is reached at $r=\bar{A}_{i}(k)$. As $\bar{A}_{i}(\bar
{N}_{0,i}(t))\leq t<\bar{A}_{i}(\bar{N}_{0,i}(t)+1)$,
\[
\bar{N}_{0,i}^{\ast}(t)=\sup_{r\geq t}(\bar{N}_{0,i}(r)-\gamma_{i}%
r)=\sup_{t\leq r<\bar{A}_{i}(\bar{N}_{0,i}(t)+1)}(\bar{N}_{0,i}(r)-\gamma
_{i}r)\vee~\sup_{n>\bar{N}_{0,i}(t)}(n+1-\gamma_{i}\bar{A}_{i}(n)).
\]
As
\[
\sup_{t\leq r<\bar{A}_{i}(\bar{N}_{0,i}(t)+1)}(\bar{N}_{0,i}(r)-\gamma
_{i}r)=\bar{N}_{0,i}(t)-\gamma_{i}t,
\]
and
\[
\sup_{n>\bar{N}_{0,i}(t)}(n+1-\gamma_{i}\bar{A}_{i}(n))=\sup_{n>\bar{N}%
_{0,i}\left(  t\right)  }\left(  n-\gamma_{i}\bar{A}_{i}\left(  n\right)
\right)  +1,
\]
we have reach the expression for $\bar{N}_{0,i}^{\ast}(t)$. The same argument
applies to $\bar{D}_{j,i}^{\ast}(\cdot)$. As to $\bar{D}_{i}^{\ast}(\cdot)$,
note that
\[
\sup_{\bar{B}_{i}(k)\leq r<\bar{B}_{i}(k+1)}(\beta_{i}r-\bar{D}_{i}%
(r))=\beta_{i}B_{i}(k+1)-(k+1)=\lim_{r\rightarrow\bar{B}_{i}(k+1)-}\beta
_{i}r-\bar{D}_{i}(r),
\]
and $\bar{B}_{i}(\bar{D}_{i}(t))\leq t<\bar{B}_{i}(\bar{D}_{i}(t)+1)$,
therefore $\bar{D}_{i}^{\ast}(t)=\sup_{n>\bar{D}_{i}(t)}(\beta_{i}\bar{B}%
_{i}(n)-n)$. \hfill$\Box$

Therefore, to simulate the processes $\bar{N}^{*}_{0,i}(\cdot), \bar{D}%
^{*}_{i}(\cdot)$ and $\bar{D}^{*}_{ij}(\cdot)$, we only need to observe the
processes $\bar{N}_{0,i}(\cdot), \bar{D}_{i}(\cdot)$ and $\bar{D}_{ij}(\cdot)$
at the discrete times when they jump, which can be expressed as random walks.
The random walks have increments $(\bar{U}_{0,i}(n), \bar{V}_{i}^{0}, \bar
{V}^{0}_{ji}(0))$ defined as
\begin{align*}
\bar{U}_{0,i}\left(  n\right)   &  =1-\gamma_{i}(\bar{A}_{i}\left(  n\right)
-\bar{A}_{i}\left(  n-1\right)  ),\text{ \ }\bar{V}_{i}^{0}\left(  n\right)
=\beta_{i}(\bar{B}_{i}\left(  n\right)  -\bar{B}_{i}\left(  n-1\right)  )-1,\\
\bar{V}_{j,i}^{0}\left(  n\right)   &  =I(r_{j}^{\prime}\left(  n\right)
=i)-\varphi_{j,i}(\bar{B}_{j}\left(  n\right)  -\bar{B}_{j}\left(  n-1\right)
), \text{ for }n\geq1
\end{align*}
and for $n=0$,
\[
\bar{U}_{i}\left(  0\right)  =-\gamma_{i}\bar{A}_{i}\left(  0\right)  ,\text{
\ \ }\bar{V}_{i}^{0}\left(  0\right)  =\beta_{i}\bar{B}_{i}\left(  0\right)
,\text{ \ \ }\bar{V}_{j,i}^{0}\left(  0\right)  =-\varphi_{j,i}\bar{B}%
_{i}\left(  0\right)  .
\]
For the pair of $(i,j)$ with $\varphi_{j,i}=0$, we have that $\bar{V}%
_{j,i}^{0}\left(  n\right)  \equiv0$ and we can ignore these coordinates. But
in order to keep the notation succinct, let us denote by $$\bar{W}%
_{i}\left(  n\right)  =(\bar{U}_{i}\left(  n\right)  ,\bar{V}_{i}^{0}\left(
n\right)  ,\bar{V}_{1,i}^{0}\left(  n\right)  ,...,\bar{V}_{d,i}^{0}\left(
n\right)  )^{T}$$ for $n\geq0$, and let
\[
W\left(  n\right)  =(\bar{W}_{1}\left(  n\right)  ,\bar{W}_{2}\left(
n\right)  ,...,\bar{W}_{d}\left(  n\right)  )^{T}.
\]
Observe that $W\left(  n\right)  $ is a vector of dimension $d\times\left(
d+2\right)  $. To make the notation homogeneous we write $W_{j}\left(
n\right)  $ for the $j$-th coordinate of $W\left(  n\right)  $ where $1\leq
j\leq d\times(d+2)$. Now we can define a $d\times\left(  d+2\right)
$-dimensional random walk $S\left(  k\right)  =S\left(  k-1\right)  +W\left(
k\right)  $, for $k\geq1$, with $S\left(  0\right)  =W\left(  0\right)  $.
Define its maximum process as
\[
M_{j}\left(  n\right)  =\sup_{k\geq n}S_{j}\left(  k\right)  \text{ for }1\leq
j\leq d\times(d+2).
\]
Following Lemma\textit{ }\ref{Lem_Aux_Eval}, to simulate
\[
(\bar{N}_{0,i}^{\ast}\left(  t\right)  ,\bar{N}_{0,i}\left(  t\right)
,\bar{D}_{j,i}^{\ast}\left(  t\right)  ,\bar{D}_{j,i}\left(  t\right)
,\bar{D}_{i}^{\ast}\left(  t\right)  ,\bar{D}_{i}\left(  t\right)
:i,j\in\{1,...,d\})
\]
is equivalent to simulate $\left(  M\left(  n\right)  ,S\left(  n\right)
:n\geq0\right)  $ jointly. Fortunately, there is an algorithm that allows us
to carry out this simulation problem for $(M\left(  n\right)  ,S\left(
n\right)  )$, adapted from work of \cite{Blanchet&Chen} and
\cite{Blanchet&Wallwater}, we provide details here for completeness.

\textbf{Remark:} In the following sections we shall simulate $(M\left(
n\right)  -W\left(  0\right)  ,S\left(  n\right)  -W\left(  0\right)  )$,
which is equivalent to simulating the sequence $\left(  M\left(  n\right)
,S\left(  n\right)  :n\geq0\right)  $ assuming that $S\left(  0\right)  =0$.
In the end, the random variable $W\left(  0\right)  $ can be simulated
independently from everything else.

\subsection{Sampling the Infinite Horizon Maximum of a Multidimensional Random
Walk with Negative Drift}

Let $S_{l^{\prime}}\left(  n\right)  $ be the coordinate of the random walk
corresponding to $\bar{V}_{j,i}^{0}\left(  n\right)  $. We have that either
$S_{l^{\prime}}\left(  n\right)  \equiv 0$ when $Q_{i,j}=0$, or $E[S_{l^{\prime}%
}\left(  n\right) ] <0$. For those coordinates for which $S_{l^{\prime}}\left(
n\right)  \equiv 0$ we have that $M_{l^{\prime}}\left(  n\right)  =0$ and there is
nothing to do. So, let us assume for simplicity and without loss of generality
that $E[S_{i}\left(  n\right)]  <0$ for all $1\leq i\leq l=d\left(  d+2\right)
$.

Define for each $\theta\in\mathbb{R}$%
\[
\psi_{i}\left(  \theta\right)  =\log E[\exp\left(  \theta_{i}W_{i}\left(
k\right)  \right) ] ,
\]
and set
\begin{align}
&  P_{\theta_{i}}\left(  W_{1}\left(  k\right)  \in dy_{1},...,W_{l}\left(
k\right)  \in dy_{l}\right) \label{Eq:Exp_Tilt}\\
=~  &  \frac{\exp\left(  \theta_{i}y_{i}-\psi_{i}\left(  \theta\right)
\right)  }{E[\exp\left(  \theta_{i}W_{i}\left(  k\right)  \right)]  }P\left(
W_{1}\left(  k\right)  \in dy_{1},...,W_{d}\left(  k\right)  \in
dy_{d}\right)  ,\nonumber
\end{align}
where $\theta_{i}\in\mathbb{R}$ and $E\exp\left(  \theta_{i}W_{i}\left(
k\right)  \right)  <\infty$. Moreover, we impose the following assumption for simplicity.

\textbf{Assumption 2b): }For each $i$ there exists $\theta_{i}^{\ast}$ such
that
\[
\psi_{i}\left(  \theta_{i}^{\ast}\right)  =0.
\]

\textbf{Remark: }Assumption 2b) is a strengthening of Assumption 2. We can
carry out our ideas under Assumption 2 following \cite{Blanchet&Chen} as we
explain next. First, instead of ($M\left(  n\right)  :n\geq0)$, given a vector
$a^{\prime}=\left(  a_{1}^{\prime},a_{2}^{\prime},...,a_{d}^{\prime}\right)
^{T}$ with non-negative components that we will explain how to choose
momentarily, consider the process $S_{a^{\prime}}(\cdot)$ and $M_{a^{\prime}%
}\left(  \cdot\right)  $ defined by
\[
S_{a^{\prime}}\left(  n\right)  :=S\left(  n\right)  +a^{\prime}%
n,~M_{a^{\prime}}\left(  n\right)  =\max_{k\geq n}\left(  S_{a}\left(
k\right)  \right)  .
\]
Note that we can simulate $\left(  S\left(  n\right)  ,M\left(  n\right)
:n\geq0\right)  $ if we are able to simulate $\left(  S_{a^{\prime}}\left(
n\right)  ,M_{a^{\prime}}\left(  n\right)  :n\geq0\right)  $. Now, note that
$\psi_{i}\left(  \cdot\right)  $ is strictly convex and that $d\psi_{i}\left(
0\right)  /d\theta<0$ so there exists $a_{i}^{\prime}>0$ large enough to force
the existence of $\theta_{i}^{\ast}>0$ such that $E[\exp\left(  \theta
_{i}^{\ast}W_{i}\left(  1\right)  +a_{i}^{\prime}\theta_{i}^{\ast}\right)
]=1$, but at the same time small enough to keep $E[\left(  W_{i}\left(
1\right)  +a_{i}^{\prime}\right)]  <0$; again, this follows by strict convexity
of $\psi_{i}\left(  \cdot\right)  $ at the origin. So, if Assumption A3b) does
not hold, but Assumption A3) holds, one can then execute Algorithm
\ref{algo Sampling M_0} based on the process $S_{a^{\prime}}(\cdot)$.\textbf{
}

\subsubsection{Construction of $\left(  S\left(  n\right)  ,M\left(  n\right)
:\,n\geq0\right)  $ via \textquotedblleft milestone events\textquotedblright%
\label{sec:Construction S_n M_n}}

We will describe the construction of a pair of sequences of stopping times
(with respect to the filtration generated by $(S\left(  n\right)  :n\geq0)$),
denoted by $(\Lambda_{n}:n\geq0)$ and $(\Gamma_{n}:n\geq1)$, which track
certain downward and upward milestones in the evolution of $\left(  S\left(
n\right)  :\,n\geq0\right)  $. We follow similar steps as described in
\cite{Blanchet&Sigman} and \cite{Blanchet&Wallwater}. These \textquotedblleft
milestone events\textquotedblright\ will be used in the design of our proposed
algorithm. The elements of the two stopping times sequences interlace with
each other (when finite) and their precise description follows next.



We start by fixing any $m>0$. Eventually, we shall choose $m$ suitably large
as we shall discuss in in equation (\ref{Eq:Assumption_m}), but our conceptual
discussion here is applicable to any $m>0$. Now set $\Lambda_{0}=0$. We
observe the evolution of the process $(S\left(  n\right)  :\,n\geq0)$ and
detect the time $\Lambda_{1}$ (the first downward milestone),
\[
\Lambda_{1}=\inf\left\{  n\geq\Lambda_{0}:\,S(n)<-m\mathbf{e}\right\}  ,
\]
where the inequality is componentwise. That is, $S_{i}(n)<-m$ for all $1\leq
i\leq l$.

Once $\Lambda_{1}$ is detected we check whether or not $\left\{  S\left(
n\right)  :\,n\geq\Lambda_{1}\right\}  $ ever goes above the height $S\left(
\Lambda_{1}\right)  +m$ (the first upward milestone); namely we define
\[
\Gamma_{1}=\inf\left\{  n\geq\Lambda_{1}:\,S_{i}(n)>m+S_{i}\left(  \Lambda
_{1}\right)  \text{ for some }1\leq i\leq l\right\}  .
\]

For now let us assume that we can check if $\Gamma_{1}=\infty$ or $\Gamma
_{1}<\infty$ (how exactly to do so will be explained in Section
\ref{Sec_Sampling_M0}). To continue simulating the rest of the path, namely
$\left\{  S\left(  n\right)  :\,n>\Lambda_{1}\right\}  $, we potentially need
to keep track of the conditional upper bound implied by the fact that
$\Gamma_{1}=\infty$. To this end, we introduce the conditional upper bound
variable $C_{UB}$ (initially $C_{UB}=\infty$). If at time $\Lambda_{1}$ we
detect that $\Gamma_{1}=\infty$, then we set $C_{UB}=S\left(  \Lambda
_{1}\right)  +m$ and continue sampling the path of the random walk conditional
on never crossing the upper bound $S\left(  \Lambda_{1}\right)  +m$ in any of
the coordinates. That is, conditional on $\left\{  S\left(  n\right)
<C_{UB}:\,n>\Lambda_{1}\right\}  $. Otherwise, if $\Gamma_{1}<\infty$, we
simulate the path conditional on $\Gamma_{1}<\infty$, until we detect the time
$\Gamma_{1}$. We continue on, sequentially checking whenever a downward or an
upward milestone is crossed as follows: for $j\geq2$, define%

\begin{equation}%
\begin{array}
[c]{l}%
\Lambda_{j}=\inf\left\{  n\geq\Gamma_{j-1}I\left(  \Gamma_{j-1}<\infty\right)
\vee\Lambda_{j-1}:\,S\left(  n\right)  <S\left(  \Lambda_{j-1}\right)
-m\mathbf{e}\right\} \\
\Gamma_{j}=\inf\left\{  n\geq\Lambda_{j}:\,S_{i}\left(  n\right)
-S_{i}\left(  \Lambda_{j}\right)  >m\text{ for some }1\leq i\leq l\right\}  ,
\end{array}
\label{eq:construction of D_j and U_j}%
\end{equation}
with the convention that if $\Gamma_{j-1}=\infty$, then $\Gamma_{j-1}I\left(
\Gamma_{j-1}<\infty\right)  =0$. Therefore, we have that $\Gamma_{j-1}I\left(
\Gamma_{j-1}<\infty\right)  >\Lambda_{j-1}$ if and only if $\Gamma
_{j-1}<\infty$.

Let us define
\begin{equation}
\Delta=\inf\{\Lambda_{n}:\Gamma_{n}=\infty,n\geq1\}. \label{eq def Delta}%
\end{equation}
So, for example, if $\Gamma_{1}=\infty$ we have that $\Delta=\Lambda_{1}$ and
the drifted random walk will never reach level $S\left(  \Lambda_{1}\right)
+m<S(0)$ again. This allows us to evaluate $M\left(  0\right)  $ by computing
\begin{equation}
M\left(  0\right)  =\max\left\{  S\left(  n\right)  :\,0\leq n\leq
\Delta\right\}  , \label{Eq_EVAL_M0}%
\end{equation}
the maximum is taken over $n$ for each coordinate.

Similarly, the event $\Gamma_{j}=\infty$, for some $j\geq1$, implies that the
level $S_{i}\left(  \Lambda_{j}\right)  +m$ is never crossed for any $i$ (that
is $S_{i}\left(  n\right)  \leq S_{i}\left(  \Lambda_{j}\right)  +m$) for all
$n\geq\Lambda_{j}$, and we let $C_{UB}=S\left(  \Lambda_{j}\right)  +m$. The
value of the vector $C_{UB}$ keeps updating as the random walk evolves, at
times where $\Gamma_{j}=\infty$.

The advantage of considering these stopping times is the following: once we
observed that some $\Gamma_{j}=\infty$, the values of $\left\{  M_{i}\left(
n\right)  :\,n\leq\Gamma_{j-1}1(\Gamma_{j-1<\infty})\vee\Lambda_{j-1}\right\}
$ for each $1\leq i\leq l$ are known without a need of further simulation.
%
Proposition \ref{pro:behavior of Delta_n and Gamma_n} ensures that it suffices
to sequentially simulate $(\Lambda_{n}:n\geq0)$ and $(\Gamma_{n}:n\geq1)$
jointly with the underlying random walk in order to sample from the sequence
$\left(  S\left(  n\right)  ,\,M\left(  n\right)  :\,n\geq0\right)  $. The
proof of Proposition \ref{pro:behavior of Delta_n and Gamma_n} is easily
adapted from the one dimensional case discussed in \cite{Blanchet&Wallwater}
and thus it is omitted.

\begin{prop}
\label{pro:behavior of Delta_n and Gamma_n} Set $\Lambda_{0}=0$ and let
$(\Lambda_{n}:\,n\geq1)$ and $(\Gamma_{n}:\,n\geq1)$ be as
(\ref{eq:construction of D_j and U_j}). We have that
\begin{equation}%
\begin{array}
[c]{cccc}%
P_{0}\left(  \lim_{n\rightarrow\infty}\Lambda_{n}=\infty\right)  =1 & \text{
and } & P_{0}\left(  \Lambda_{n}<\infty\right)  =1,\, & \forall n\geq1.
\end{array}
\label{eq:behavior of D_n}%
\end{equation}
Furthermore,
\begin{equation}
P_{0}\left(  \Gamma_{n}=\infty,\,\text{i.o.}\,\right)  =1.
\label{eq: behavior U_n io as}%
\end{equation}

\end{prop}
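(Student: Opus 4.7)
The plan is to establish the three claims in order, leveraging the strong Markov property at the stopping times $\Lambda_n$ together with exponential martingale bounds for the random walk with negative drift.

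\emph{Step 1: $\Lambda_n < \infty$ a.s. for every $n \geq 1$.} I will argue inductively. The base case $\Lambda_0 = 0$ is trivial. Assume $\Lambda_{n-1} < \infty$ a.s., and set $\tau_n := \Gamma_{n-1}\mathbf{1}\{\Gamma_{n-1} < \infty\} \vee \Lambda_{n-1}$, which is almost surely finite. By the strong Markov property at $\tau_n$, the post-$\tau_n$ increments form an independent copy of the walk. Since every coordinate has strictly negative drift (a consequence of Assumption 2b: strict convexity of $\psi_i$ with $\psi_i(0) = 0$ and $\psi_i(\theta_i^\ast) = 0$ for $\theta_i^\ast > 0$ forces $\psi_i'(0) < 0$), the SLLN gives $S_i(k) \to -\infty$ a.s. for each $i$. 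Taking the maximum over $i = 1,\dots,l$ of the (a.s.\ finite) time at which $S_i(k) < S_i(\Lambda_{n-1}) - m$ and stays below, one obtains an almost sure upper bound for $\Lambda_n$.

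\emph{Step 2: $\Lambda_n \to \infty$ a.s.} At $n = \Lambda_{n-1}$ the defining inequality $S(n) < S(\Lambda_{n-1}) - m\mathbf{e}$ fails, so $\Lambda_n > \Lambda_{n-1}$. Since $(\Lambda_n)$ is a strictly increasing sequence of integer-valued stopping times, it diverges almost surely.

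\emph{Step 3: $P_0(\Gamma_n = \infty \text{ i.o.}) = 1$.} By the strong Markov property at $\Lambda_n$, the shifted process $\widetilde S(k) := S(\Lambda_n + k) - S(\Lambda_n)$ is distributed as the random walk started at $0$ and is independent of $\mathcal{F}_{\Lambda_n}$. The event $\{\Gamma_n = \infty\}$ is exactly $\{\sup_{k\geq 0} \widetilde S_i(k) \leq m \text{ for all } i\}$, so
\[
P_0\bigl(\Gamma_n = \infty \,\big|\, \mathcal{F}_{\Lambda_n}\bigr) = p_0 := P_0\Bigl(\sup_{k \geq 0} S_i(k) \leq m \text{ for all } i\Bigr).
\]
Because $\psi_i(\theta_i^\ast) = 0$, the process $\exp(\theta_i^\ast S_i(n))$ is a nonnegative martingale; optional stopping at the first passage above $m$ yields $P_0(\sup_k S_i(k) > m) \leq e^{-\theta_i^\ast m}$. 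A union bound then gives $p_0 \geq 1 - \sum_{i=1}^{l} e^{-\theta_i^\ast m}$, which is strictly positive once $m$ is chosen large enough, as in (\ref{Eq:Assumption_m}). With $p_0 > 0$ fixed, the identity $\sum_{n\geq 1} P_0(\Gamma_n = \infty \mid \mathcal{F}_{\Lambda_n}) = \infty$ a.s.\ together with the conditional Borel--Cantelli lemma yields $P_0(\Gamma_n = \infty \text{ i.o.}) = 1$.

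The main technical point is the estimate $p_0 > 0$: this is where Assumption 2b and the specific choice of $m$ enter, and it is the only place the multidimensional nature of the walk matters (through the union bound over coordinates). Everything else is a direct adaptation of the one-dimensional arguments in \cite{Blanchet&Wallwater}, which is why the authors state that the proof is omitted.
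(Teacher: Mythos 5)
Your Steps 1 and 2 are essentially right: the componentwise SLLN argument gives $\Lambda_{n}<\infty$ a.s.\ (in fact no Markov property is needed there, which is fortunate, see below), and strict interlacing gives $\Lambda_{n}\geq n\rightarrow\infty$. The paper itself omits the proof and defers to the one-dimensional argument of \cite{Blanchet&Wallwater}, so the comparison here is with correctness per se, and your exponential-martingale bound $p_{0}\geq1-\sum_{i}e^{-\theta_{i}^{\ast}m}>0$ under (\ref{Eq:Assumption_m}) is exactly the right quantitative ingredient.

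The gap is in Step 3, and it sits at the heart of (\ref{eq: behavior U_n io as}). For $n\geq2$, $\Lambda_{n}$ is \emph{not} a stopping time of the natural filtration of $(S(k))$: its definition (\ref{eq:construction of D_j and U_j}) involves $\Gamma_{n-1}I(\Gamma_{n-1}<\infty)$, and $\{\Gamma_{n-1}=\infty\}$ depends on the entire future path. So the strong Markov property "at $\Lambda_{n}$" is not available as stated, and the claimed identity $P_{0}(\Gamma_{n}=\infty\mid\mathcal{F}_{\Lambda_{n}})=p_{0}$ is actually false: since any finite $\Gamma_{n-1}$ satisfies $\Gamma_{n-1}\leq\Lambda_{n}$, the stopped path up to $\Lambda_{n}$ reveals whether $\Gamma_{n-1}=\infty$, and on that event the post-$\Lambda_{n}$ increments are the walk conditioned never to exceed $C_{UB}-S(\Lambda_{n})$, not a free copy; the conditional probability of $\{\Gamma_{n}=\infty\}$ is then $p_{0}/P_{0}(\text{never exceed }C_{UB}-S(\Lambda_{n}))>p_{0}$. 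What survives, and what you actually need, is the one-sided bound: because $C_{UB}-S(\Lambda_{n})$ exceeds $m\mathbf{e}$ componentwise (each $\Lambda$-step drops every coordinate by more than $m$), the event "never rise above $m\mathbf{e}$ after $\Lambda_{n}$" is contained in the conditioning event, so the conditional probability is $\geq p_{0}>0$. You must then also justify adaptedness before invoking the conditional Borel--Cantelli lemma: this works because $\{\Gamma_{n}=\infty\}$ is determined by the path up to $\Lambda_{n+1}$ (any finite $\Gamma_{n}$ satisfies $\Gamma_{n}\leq\Lambda_{n+1}$), so one can take $\mathcal{G}_{n}=\sigma(S(k\wedge\Lambda_{n+1}):k\geq0)$ and use $P(\Gamma_{n}=\infty\mid\mathcal{G}_{n-1})\geq p_{0}$; alternatively, replace $(\Lambda_{n},\Gamma_{n})$ by the genuine stopping times obtained by always restarting at $\Gamma_{n-1}$ (these coincide with $(\Lambda_{n},\Gamma_{n})$ on $\bigcap_{j<n}\{\Gamma_{j}<\infty\}$) and run a geometric-trials bound $P(\Gamma_{j}<\infty,\;k\leq j\leq N)\leq(1-p_{0})^{N-k+1}$, handling the residual $C_{UB}$-conditioning by the same containment argument. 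As written, with the incorrect equality and the unjustified strong Markov step, the proof of the "infinitely often" claim does not go through, although the fix follows the lines just indicated.
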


\medskip{}

In the setting of Proposition \ref{pro:behavior of Delta_n and Gamma_n}, for
each $k\geq0$ we can define $N_{0}\left(  k\right)  =\inf\left\{
n\geq1:\,\Lambda_{n}\geq k\right\}  $ and $\mathcal{T}\left(  k\right)
=\inf\left\{  j\geq N_{0}\left(  k\right)  +1:\,\Gamma_{j}=\infty\right\}  $.
Both of them are finite random variables such that%
\begin{equation}
M\left(  k\right)  =\max\left\{  S\left(  n\right)  :\,k\leq n\leq
\Lambda_{\mathcal{T}\left(  k\right)  }\right\}
\label{eq:connection M_n and stopping times}%
\end{equation}


In other words, $\Lambda_{\mathcal{T}\left(  k\right)  }$ is the time, not earlier
than $k$, at which we detect a second unsuccessful attempt at building an
upward\ patch directly. The fact that the relation in
(\ref{eq:connection M_n and stopping times}) holds, follows easily by
construction of the stopping times in (\ref{eq:construction of D_j and U_j}).
Note that it is important, however, to define $\mathcal{T}\left(  k\right)
\geq N_{0}\left(  k\right)  +1$ so that $\Lambda_{N_{0}\left(  k\right)  +1}$
is computed first. In that way, we can make sure that the maximum of the sequence
$\left(  S\left(  n\right)  :\,n\geq k\right)  $ is achieved between $k$ and
$\Lambda_{\mathcal{T}\left(  k\right)  }$.
%

\medskip{}

These observation gives rise to our suggested high-level scheme. The procedure
sequentially constructs the random walk in the intervals $\left[
\Lambda_{n-1},\Lambda_{n}\right)  $ for $n\geq1$. Here is the\textbf{\ }%
high-level procedure to construct $\left(  S\left(  n\right)  ,\,M\left(
n\right)  :n\geq0\right)  $:




\begin{algorithm}
\label{Algo Hueristics for Sn Mn} At the $k$-th iteration, for $k\geq1$: \newline%
\textbf{{Step 1: \textquotedblleft downward patch". } } Conditional on the
path not crossing $C_{UB}$ we simulate the path until we detect $\Lambda
_{k}\,$, which is  the first time when the random walk visits the set $(-\infty,S_{1}%
(\Lambda_{k-1})-m]\times...\times(-\infty,S_{l}(\Lambda_{k-1})-m]$
\label{Hueristics Step_1}.\newline\textbf{{Step 2: \textquotedblleft upward
patch". }} Check whether or not the level $S_{i}(\Lambda_{k})+m$ is ever
crossed by any of the coordinates $i$. That is, whether $\Gamma_{k}<\infty$ or
not. If the answer is \textquotedblleft Yes\textquotedblright\ then,
conditional on the path crossing $S_{i}(\Lambda_{k})+m$ for some $i$, but
not crossing the level $\left(  C_{UB}\right)  _{i}$, we simulate the path
until we detect $\Gamma_{k}$, the first time the level $S_{i}(\Lambda_{k})+m$
for at least one of the coordinates $i$ \label{Hueristics Step_2}. Otherwise
$\left(  \Gamma_{j}=\infty\right)  $, and we can update $C_{UB}$:
$C_{UB}\leftarrow S(\Lambda_{j})+m\mathbf{e}$
\end{algorithm}

The implementation of the steps in Algorithm \ref{Algo Hueristics for Sn Mn}
will be discussed in detail in the next sections, culminating with the precise
description given in Algorithm \ref{algo last} at the end of Section
\ref{Section_Final_RW}.

\subsubsection{Sampling $M\left(  0\right)  $ jointly with $\left(  S\left(
1\right)  ,...,S\left(  \Delta\right)  \right)  $}

\label{Sec_Sampling_M0}

The goal of this section is to sample exactly from $M\left(  0\right)  $. To
this end we need to simulate the sample path up to the first $\Gamma_{j}$ such
that $\Gamma_{j}=\infty$ (recall that $\Delta$ was defined to be the
corresponding $\Lambda_{j}$). This sample path will be used in the
construction of further steps in Algorithm \ref{Algo Hueristics for Sn Mn}.
This construction is directly taken from \cite{Blanchet&Chen}.

For any positive vectors $a,b>0$. Let%

\begin{equation}%
\begin{array}
[c]{lc}%
\tau_{b}=\inf\left\{  n\geq0:\,S_{i}\left(  n\right)  >b_{i}\text{ for some
}i\right\}  , & \\
\tau_{-b}=\inf\left\{  n\geq0:\,S_{i}\left(  n\right)  <-b_{i}\text{ for all
}i\right\}  , & \\
P_{a}\left(  \cdot\right)  =P\left(  \cdot\,\mid S\left(  0\right)  =a\right)
. &
\end{array}
\label{eq: Crossig times T_m T_(-m)}%
\end{equation}
Since we concentrate on $M\left(  0\right)  $, we have that $C_{UB}=\infty$.
We first need to explain a procedure to generate a Bernoulli random variable
with success parameter $P_{0}\left(  \tau_{m\mathbf{e}}<\infty\right)  $, for
suitably chosen $m>0$. Also, this procedure, as we shall see, will allow us to
simultaneously simulate $\left(  S\left(  1\right)  ,...,S\left(  \tau_{m\mathbf{e}}\right)
\right)  $ given that $\tau_{m\mathbf{e}}<\infty$.


We think of the probability measure $P_{0}\left(  \cdot\right)  $ as defined on
the canonical space $\Omega=\{0\}\times\mathbb{R}^{l}\times\mathbb{R}%
^{l}\times...$ endowed with $\sigma$-field generated by the Borel $\sigma
$-field of finite dimensional projections (i.e. the Kolmogorov $\sigma
$-field). Our goal is to simulate from the conditional law of $(S\left(
n\right)  :0\leq n\leq\tau_{m\mathbf{e}})$ given that $\tau_{m\mathbf{e}%
}<\infty$ and $S(0)=0$, which we shall denote by $P_{0}^{\ast}$ in the rest of
this part.

First, we select $m>0$ such that
\begin{equation}
\sum_{k=1}^{l}\exp\left(  -\theta_{i}^{\ast}m\right)  <1.
\label{Eq:Assumption_m}%
\end{equation}

Now let us introduce our proposal distribution $P_{0}^{\prime}\left(
\cdot\right)  $, defined on the space $\Omega^{\prime}=\Omega\times
\{1,2,...,l\}$. We endow the probability space with the associated Kolmogorov
$\sigma$-field. So, a typical element $\omega^{\prime}$ sampled under
$P_{0}^{\prime}\left(  \cdot\right)  $ is of the form $\omega^{\prime}%
$=(($S\left(  n\right)  :n\geq0$),$Index)$, where $Index\in\{1,2,...,l\}$. The
distribution of $\omega^{\prime}$ induced by $P_{0}^{\prime}\left(
\cdot\right)  $ is described as follows, first,%
\begin{equation}
P_{0}^{\prime}\left(  Index=i\right)  =w_{i}:=\frac{\exp\left(  -\theta
_{i}^{\ast}m\right)  }{\sum_{j=1}^{l}\exp\left(  -\theta_{j}^{\ast}m\right)
}. \label{DisK}%
\end{equation}
Now, given $Index=i$, for every set $A\in\sigma$($S\left(  k\right)  :0\leq
k\leq n)$,%
\[
P_{0}^{\prime}\left(  A|Index=i\right)  =E_{0}[\exp\left(  \theta_{i}^{\ast
}Z_{i}\left(  t\right)  \right)  I_{A}].
\]
In particular, the Radon-Nikodym derivative (i.e. the likelihood ratio)
between the distribution of $\omega=(S\left(  k\right)  :0\leq k\leq n)$ under
$P_{0}^{\prime}\left(  \cdot\right)  $ and $P_{0}\left(  \cdot\right)  $ is
given by%
\[
\frac{dP_{0}^{\prime}}{dP_{0}}\left(  \omega\right)  =\sum_{i=1}^{l}w_{i}%
\exp\left(  \theta_{i}^{\ast}S_{i}\left(  n\right)  \right)  .
\]

\textit{The distribution of $(S\left(  k\right)  :k\geq0)$ under
$P_{0}^{\prime}\left(  \cdot\right)  $ is precisely the proposal distribution
that we shall use to apply acceptance / rejection. }It is straightforward to
simulate under $P_{0}^{\prime}\left(  \cdot\right)  $. First, sample $Index$
according to the distribution (\ref{DisK}). Then, conditional on $Index=i$,
the process $S\left(  \cdot\right)  $ is also a multidimensional random walk.
Indeed, given $Index=i$, under $P_{0}^{\prime}\left(  \cdot\right)  $ it
follows that $S\left(  n\right)  $ can be represented as%
\begin{equation}
S\left(  n\right)  =W^{\prime}\left(  1\right)  +...+W^{\prime}\left(
n\right)  , \label{J_prime}%
\end{equation}
where $W^{\prime}\left(  k\right)  $'s are i.i.d. with distribution obtained
by exponential titling, such that for all $A\in\sigma(W^{\prime}\left(
k\right)  )$,%
\begin{equation}
P_{0}^{\prime}(W^{\prime}\left(  k\right)  \in A)=E[\exp(\theta_{i}^{\ast
}W_{i})I_{A}]. \label{J_P2}%
\end{equation}

Now, note that we can write
\begin{align*}
E_{0}^{\prime}\left(  S_{Index}\left(  n\right)  \right)   &  =\sum_{i=1}%
^{l}E_{0}(S_{i}(n)\exp\left(  \theta_{i}^{\ast}S_{i}\left(  n\right)  \right)
)P_{0}^{\prime}\left(  Index=i\right) \\
&  =\sum_{i=1}^{l}\frac{d\psi_{i}\left(  \theta_{i}^{\ast}\right)  }{d\theta
}w_{i}>0,
\end{align*}
where the last inequality follows by convexity of $\psi_{k}\left(
\cdot\right)  $ and by definition of $\theta_{k}^{\ast}$. So, we have that
$S_{Index}\left(  n\right)  \nearrow\infty$ as $n\nearrow\infty$ with
probability one under $P_{0}^{\prime}\left(  \cdot\right)  $, by the Law of
Large Numbers. Consequently $\tau_{m\mathbf{e}}<\infty$ a.s. under
$P_{0}^{\prime}\left(  \cdot\right)  $.

Recall that $P_{0}^{\ast}\left(  \cdot\right)  $ is the conditional law of
$\left(  S\left(  n\right)  :0\leq n\leq\tau_{m\mathbf{e}}\right)  $ given
that $\tau_{m\mathbf{e}}<\infty$ and $S\left(  0\right)  =0$. In order to
assure that we can indeed apply acceptance / rejection theory to simulate from
$P_{0}^{\ast}(\cdot)$, we need to show that the likelihood ratio
$dP_{0}/dP_{0}^{\prime}$ is bounded. Indeed,
\begin{align}
  \frac{dP_{0}^{\ast}}{dP_{0}^{\prime}}\left(  S\left(  n\right)  :0\leq
t\leq\tau_{m\mathbf{e}}\right) \nonumber
&  =\frac{1}{P_{0}\left(  \tau_{m\mathbf{e}}<\infty\right)  }\times
\frac{dP_{0}}{dP_{0}^{\prime}}\left(  S\left(  n\right)  :0\leq t\leq
\tau_{m\mathbf{e}}\right) \nonumber\\
&  =\frac{1}{P_{0}\left(  \tau_{m\mathbf{e}}<\infty\right)  }\times\frac
{1}{\sum_{i=1}^{l}w_{i}\exp\left(  \theta_{i}^{\ast}S_{i}\left(
\tau_{m\mathbf{e}}\right)  \right)  }. \label{ARB}%
\end{align}
Upon $\tau_{m\mathbf{e}}$, there is an index $I^{\prime}$ ($I^{\prime}$ may be
different from $Index$) such that $\exp\left(  \theta_{I^{\prime}}^{\ast
}S_{I^{\prime}}\left(  \tau_{m\mathbf{e}}\right)  \right)  \geq\exp\left(
\theta_{I^{\prime}}^{\ast}m\right)  $, therefore%
\begin{equation}
\frac{1}{\sum_{i=1}^{l}w_{i}\exp\left(  \theta_{i}^{\ast}S_{i}\left(
\tau_{m\mathbf{e}}\right)  \right)  }\leq\frac{1}{w_{I^{\prime}}\exp\left(
\theta_{I^{\prime}}^{\ast}m\right)  }=\sum_{i=1}^{l}\exp\left(  -\theta
_{i}^{\ast}m\right)  <1, \label{B_1}%
\end{equation}
where the last inequality follows by (\ref{Eq:Assumption_m}). Consequently,
plugging (\ref{B_1}) into (\ref{ARB}) we obtain that%
\begin{equation}
\frac{dP_{0}^{\ast}}{dP_{0}^{\prime}}\left(  S\left(  n\right)  :0\leq
n\leq\tau_{m\mathbf{e}}\right)  \leq\frac{1}{P_{0}\left(  \tau_{m\mathbf{e}%
}<\infty\right)  }. \label{ARB_1}%
\end{equation}

Now we are ready to fully discuss our algorithm to sample $J$ and
$\omega=\left(  S\left(  1\right)  ,...S\left(  \tau_{m\mathbf{e}}\right)
\right)  $ given $\tau_{m\mathbf{e}}<\infty$. Upon termination we will output
the pair $\left(  J,\omega\right)  $. If $J=1$, then we set $\omega=(S\left(
1\right)  ,...,S(\tau_{m\mathbf{e}}))$. Otherwise ($J=0$), we set
$\omega=\left[  \,\right]  $, the empty vector.

\begin{algorithm}
\label{algo Sampling M_0}

INPUT:{ }${\theta_{i}^{\ast}}${ and }$m$ satisfying (\ref{Eq:Assumption_m}){.}

OUTPUT: { $J\sim Ber\left(  P_{0}\left(  \tau_{m\mathbf{e}}<\infty\right)
\right)  $ and $\omega$. If $J=1$, then $\omega=\left(  S\left(  1\right)
,\ldots,S\left(  \tau_{m\mathbf{e}}\right)  \right)  $ . Otherwise ($J=0$),
$\omega=\left[  \,\right]  $ }

Step 1: Sample $\left(  S\left(  n\right)  :0\leq t\leq\tau_{m\mathbf{e}}\right)  $
according to $P_{0}^{\prime}\left(  \cdot\right)  $ as indicated via equations
(\ref{J_prime}) and (\ref{J_P2}).

Step 2:Given $\left(  S\left(  n\right)  :0\leq t\leq\tau_{m\mathbf{e}}\right)  $,
simulate a Bernoulli $J$ with probability
\[
\frac{1}{\sum_{i=1}^{l}w_{i}\exp\left(  \theta_{i}^{\ast}S_{i}\left(
\tau_{m\mathbf{e}}\right)  \right)  }.
\]

Step 3: If $J=1$, \textbf{output} {$\left(  J,\omega\right)  $, where $\omega=\left(
S\left(  j\right)  :\,1\leq j\leq\tau_{m\mathbf{e}}\right)  $}. ELSE, if
$J=0$, \textbf{output} {$\left(  J,\omega\right)  $, where $\omega=\left[
\,\right]  $.}
\end{algorithm}

The authors in \cite{Blanchet&Chen} show that the output of the previous
procedure indeed follows the distribution of $\left(  S\left(  n\right)
:0\leq n\leq\tau_{m\mathbf{e}}\right)  $ given that $\tau_{m\mathbf{e}}%
<\infty$ and $S\left(  0\right)  =0$. Moreover, the Bernoulli random variable
$J$ has probability $P_{0}\left(  \tau_{m\mathbf{e}}<\infty\right)  $ of success.

Now we are ready to give the algorithm sampling $M(0)$ jointly with
$(S(1),...,S(\Delta))$. Before we move on to the algorithm let us define the
following. Given a vector $\mathbf{s}$, of dimension $d\geq1$, we let
$\mathbf{L}(\mathbf{s})=\mathbf{s}\left(  d\right)  $ (i.e. the $d$-th
component of the vector $\mathbf{s}$).

\begin{algorithm}
\label{algo Sampling patches of path}

INPUT {Same as Algorithm \ref{algo Sampling M_0}}

OUTPUT { The path $\left(  S\left(  1\right)  ,....,S\left(  \Delta\right)
\right)  $}\newline Initialization\ $\mathbf{s}\leftarrow\lbrack]$,
$F\leftarrow0$, and $\mathbf{L}=0$.\newline

(Initially $\mathbf{s}$ is the empty array, the variable $\mathbf{L}$
represents the last position of the drifted random walk.

WHILE {$F=0$} {\newline}

\qquad{Sample $\left(  S\left(  1\right)  ,\ldots,S\left(  \tau_{-2m\mathbf{e}%
}\right)  \right)  $ given $S\left(  0\right)  =0,$\newline$\qquad
\mathbf{s}=\left[  \mathbf{s},\mathbf{L}+S\left(  1\right)  ,\ldots
,\mathbf{L}+S\left(  \tau_{-2m\mathbf{e}}\right)  \right]  ,$\newline\qquad
}${\mathbf{L}}=\mathbf{L}+S\left(  \tau_{-2m\mathbf{e}}\right)  .$

\qquad Call Algorithm \ref{algo Sampling M_0} and obtain $\left(
J,\omega\right)  $,

\qquad IF { $J=1$ } {Set $\mathbf{s=[s,}\mathbf{L}+\omega]$,}

\qquad ELSE { $F\leftarrow1\,\,\left(  J=0\right)  $}

END WHILE

\textbf{OUTPUT }{ }\textbf{$\mathbf{s}$. }
\end{algorithm}

\begin{prop}
\label{Prop_Mo_and_upto_Delta}The output of Algorithm
\ref{algo Sampling patches of path} has the correct distribution according to
(\ref{eq def Delta}) and (\ref{Eq_EVAL_M0}). Moreover, if $\bar{N}$ is the
number of random variables needed to terminate Algorithm
\ref{algo Sampling patches of path}, there is $\delta>0$ such that
$E[\exp\left(  \delta\bar{N}\right)]  <\infty$.
\end{prop}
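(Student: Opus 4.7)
The statement has two parts: distributional correctness of the output $\mathbf{s}$ and a finite exponential moment for $\bar N$. I would handle them separately.

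Correctness follows by induction on the WHILE iteration index $k$, combining the strong Markov property of $S$ with the correctness of Algorithm \ref{algo Sampling M_0}. At the start of iteration $k$, conditional on the loop not yet having stopped, the variable $\mathbf{L}$ holds the current value of the walk $S(\Lambda_{k-1})$ (with $\Lambda_0=0$). The downward patch samples, independently of the past and under the unconditional law $P_0$, a fresh segment of the walk until it first falls below $-2m\mathbf{e}$; after shifting by $\mathbf{L}$ and appending, the concatenation reproduces the segment of the true walk up to $\Lambda_k$. The subsequent call to Algorithm \ref{algo Sampling M_0}, by the validity statement immediately following it, returns an independent Bernoulli $J$ with success probability $P_0(\tau_{m\mathbf{e}}<\infty)$, which is exactly the conditional probability that $\Gamma_k<\infty$ given the present state, together with an independent draw of the walk up to $\tau_{m\mathbf{e}}$ on $\{J=1\}$. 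The loop exits the first time $J=0$, which coincides with the first $j$ such that $\Gamma_j=\infty$, i.e.\ with $\Delta$ as in (\ref{eq def Delta}). Splicing these iterations together shows that $\mathbf{s}$ has the joint law of $(S(1),\ldots,S(\Delta))$, from which $M(0)$ is recovered as in (\ref{Eq_EVAL_M0}).

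For the tail bound, let $G$ be the number of WHILE iterations and $X_k$ the number of random variables consumed in the $k$-th iteration. The $X_k$ are i.i.d., and $G$ is independent of $(X_k)$ and geometric with success parameter $q:=1-P_0(\tau_{m\mathbf{e}}<\infty)$. Because $\exp(\theta_i^{\ast}S_i(\cdot))$ is a nonnegative $P_0$-martingale with unit mean (as $\psi_i(\theta_i^{\ast})=0$), Doob's maximal inequality and a union bound give
\[
P_0(\tau_{m\mathbf{e}}<\infty) \;\leq\; \sum_{i=1}^{l} P_0\!\left(\sup_{n\geq 0} S_i(n)>m\right) \;\leq\; \sum_{i=1}^{l}\exp(-\theta_i^{\ast}m) \;<\; 1
\]
by (\ref{Eq:Assumption_m}), so $q>0$. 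Next, I would verify $E[\exp(\delta X_1)]<\infty$ for some $\delta>0$: the downward patch uses $\tau_{-2m\mathbf{e}}=\max_i\tau_{-2m,i}$ random variables under $P_0$, and since each coordinate has strictly negative drift, a coordinatewise Chernoff bound (available via Assumption 2 and the exponential tilting (\ref{Eq:Exp_Tilt})) yields finite $E_0[\exp(\delta\tau_{-2m,i})]$ and hence finite exponential moments for the maximum at a smaller $\delta$; while Algorithm \ref{algo Sampling M_0} uses $\tau_{m\mathbf{e}}$ steps under the proposal law $P_0'$, and on $\{Index=i\}$ the $i$-th coordinate has positive drift $\psi_i'(\theta_i^{\ast})>0$ by strict convexity, so the same argument gives a uniform bound $E_0'[\exp(\delta\tau_{m\mathbf{e}})]<\infty$. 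Choosing $\delta>0$ small enough that $\rho:=E[\exp(\delta X_1)]$ satisfies $(1-q)\rho<1$ and conditioning on $G$,
\[
E[\exp(\delta \bar N)] \;=\; q\rho\sum_{n\geq 0}\bigl((1-q)\rho\bigr)^{n} \;<\; \infty.
\]

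The main obstacle is the control of the multidimensional crossing time $\tau_{-2m\mathbf{e}}$: because it is the time at which the \emph{slowest} coordinate has dropped by $2m$, one cannot just project onto a single coordinate; the bound must be built out of coordinatewise Laplace transforms and then combined through a union bound, which forces a possibly smaller $\delta$. This is precisely where the coordinatewise exponential-moment hypothesis in Assumption 2 is used, and the remaining manipulations — correctness of Algorithm \ref{algo Sampling M_0} (cited from \cite{Blanchet&Chen}) and the geometric-sum calculation — are routine.
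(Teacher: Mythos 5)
The paper itself gives no argument for this proposition beyond the one-line remark that it ``follows directly from the analysis in \cite{Blanchet&Chen}'', so your self-contained proof goes beyond what is written: your splicing argument (strong Markov property plus spatial homogeneity to identify each downward segment with an unconditioned passage and each call to Algorithm \ref{algo Sampling M_0} with the Bernoulli/conditioned-segment pair, stopping at the first $J=0$, which is $\Delta$) is exactly the structure of the cited analysis, and the correctness half is fine.

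Two technical claims in your moment bound are, however, not correct as stated, though both are repairable without changing the strategy. First, $G$ is \emph{not} independent of $(X_k)$: the Bernoulli $J$ in Algorithm \ref{algo Sampling M_0} is generated with acceptance probability depending on $S(\tau_{m\mathbf{e}})$ under the proposal, so the indicator that iteration $k$ is the last one is correlated with the number of variables consumed in that iteration. The geometric-sum computation should instead be run on $\rho_1:=E\bigl[\exp(\delta X_1)\,I(J_1=1)\bigr]$, which by Cauchy--Schwarz is at most $\bigl(E[\exp(2\delta X_1)]\bigr)^{1/2}\bigl(P_0(\tau_{m\mathbf{e}}<\infty)\bigr)^{1/2}<1$ for $\delta$ small, yielding $E[\exp(\delta\bar N)]\leq E[\exp(\delta X_1)]\sum_{n\geq0}\rho_1^{\,n}<\infty$. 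Second, $\tau_{-2m\mathbf{e}}\neq\max_i\tau_{-2m,i}$ in general: $\tau_{-2m\mathbf{e}}$ requires all coordinates to lie below $-2m$ \emph{simultaneously}, and a coordinate may rebound above $-2m$ after its first crossing, so the maximum of the coordinatewise first-passage times is only a lower bound. The exponential tail still holds, but via the observation that on $\{\tau_{-2m\mathbf{e}}>n\}$ some coordinate satisfies $S_i(n)\geq-2m$, whence
\[
P_0\left(\tau_{-2m\mathbf{e}}>n\right)\;\leq\;\sum_{i=1}^{l}P_0\left(S_i(n)\geq-2m\right),
\]
and each summand decays geometrically in $n$ by a Chernoff bound, since each coordinate has negative drift and finite exponential moments (Assumption 2). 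With these two repairs your argument is complete and consistent with the analysis the paper invokes from \cite{Blanchet&Chen}.
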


\textit{Proof of Proposition }\ref{Prop_Mo_and_upto_Delta}: As noted earlier,
this follows directly from the analysis in \cite{Blanchet&Chen}.\hfill$\Box$






\subsubsection{ From $M\left(  0\right)  $ to $\left(  S\left(  k\right)
,M\left(  k\right)  \,:0\leq k\leq n\right)  $\label{Section_Final_RW}}

In this section we will explain in detail the complete procedure to sample
$M(k)$ jointly with $S(k)$ for $1\leq k\leq n$, where $n$ is a finite number
given by the user. The algorithm is similar as that for sampling $(M(0)$ and
$S(1),..,S(\Delta)$ and is also based on simulating the downward and upward
patches. The main difference is that $C_{UB}<\infty$ for $M(k)$ with $k>0$ and
hence we need to simulate the random walk $S(k)$ conditional on that it never
crosses the level $C_{UB}$. In particular, we shall use the algorithm for
sampling $M(0)$ developed in Section \ref{Sec_Sampling_M0} to help us simulate
the conditional probability.

In Step \ref{Hueristics Step_1} we need to sample the maximum of the drifted
random walk $\left(  S\left(  n\right)  :\,n\geq0\right)  $. Suppose that our
current position is $S\left(  \Lambda_{j}\right)  $ and we know that the
random walk will never reach position $C_{UB}$. In other words, there exist
some $n\leq j-1$ such that $\Gamma_{n}=\infty$. Let $i=\max\{1\leq n\leq
j-1:\Gamma_{i}=\infty\}$, then $C_{UB}=S(\Lambda_{i})+m$. We now explain how
to simulate the path up to the first time $\Lambda_{\bar{n}}$, for $\bar{n}%
>j$, such that $\Gamma_{\bar{n}}=\infty$.

First, we call Algorithm \ref{algo Sampling patches of path} and obtain the
output $\omega=\left(  s_{1},...,s_{\Delta}\right)  $. We compute $M\left(
0\right)  $ according to (\ref{Eq_EVAL_M0}) and keep calling Algorithm
\ref{algo Sampling patches of path} until we obtain $M\left(  0\right)  \leq
C_{UB}-S(\Lambda_{j})$, at which point we set
\begin{equation}
\left(  S\left(  \Lambda_{j}\right)  ,S\left(  \Lambda_{j}+1\right)
,\ldots,S\left(  \Lambda_{\bar{n}}\right)  \right)  =(S\left(  \Lambda
_{j}\right)  ,S\left(  \Lambda_{j}\right)  +s_{1},...,S\left(  \Lambda
_{j}\right)  +s_{\Delta}). \label{Outp}%
\end{equation}
It is clear from the construction of the path that indeed $\omega=\left(
s_{1},...,s_{\Delta}\right)  $ has the correct distribution of $\left(
S\left(  1\right)  ,...,S\left(  \Delta\right)  \right)  $ given $\tau
_{C_{UB}-S(\Lambda_{j})}=\infty$ and $S\left(  0\right)  =0$. Then, we simply
update $C_{UB}\leftarrow S\left(  \Lambda_{j}\right)  +s_{\Delta}+m\mathbf{e}$.

We close this section by giving the explicit implementation of our general
method outlined in Subsections \ref{sec:Construction S_n M_n}. In order to
describe the procedure, let us recall some definitions. Given an array
$\mathbf{s}$ of dimensions $l\times n\geq1$, let $\mathbf{L}\left(
\mathbf{s}\right)  =\mathbf{s}\left(  n\right)  $ (the last column vector of
dimension $l$ in the array). Given an array $\mathbf{z}$ of size $l^{\prime
}\times n$, set $\mathbf{d}\left(  \mathbf{z}\right)  =n$ (the number of
columns in the array). We shall evaluate $\mathbf{d}\left(  \mathbf{\cdot
}\right)  $ on arrays that might have different numbers of rows.

\begin{algorithm}
\label{algo last}

INPUT {Same as Algorithm \ref{algo Sampling M_0}}

OUTPUT {$\left(  S\left(  k\right)  ,M\left(  k\right)  \,:0\leq k\leq
n\right)  $}

Initialization $\mathbf{s}\longleftarrow\lbrack0]$, $C_{UB}\longleftarrow
\infty$, $\mathbf{N}\longleftarrow\lbrack]$, $F\longleftarrow0$. ({Initialize
the sample path with the array containing only one vector of }${l}%
${-dimensions.)}

Comments: The vector $N$, which is initially empty records the times
$\Lambda_{j}$ such that $\Gamma_{j}=\infty$. $F$ is a Boolean variable which
detects when we have enough information to compute $M\left(  n\right)  $

WHILE $F=0\newline\qquad F_{1}\longleftarrow0$\newline\qquad WHILE {$F_{1}=0$%
}\newline\qquad\qquad Call{ Algorithm \ref{algo Sampling patches of path}.
Obtain as output $\omega=(s_{1},...,s_{\Delta})$, and get }$M\left(  0\right)
.$\newline\qquad\qquad{IF }$M\left(  0\right)  \leq C_{UB}-\mathbf{L}%
(\mathbf{s})$, update $C_{UB}=\mathbf{L}(\mathbf{s})+s_{\Delta}+M(0)\mathbf{e}%
$, $\mathbf{s=[s,}\mathbf{L}(\mathbf{s})+\omega]$, $\mathbf{N}=[\mathbf{N}%
,\mathbf{d}\left(  \mathbf{s}\right)  ]$ and $F_{1}=1$.\newline\qquad END
WHILE\newline

\qquad IF {$\mathbf{N}\left(  \mathbf{d(N)}-1\right)  \geq n$}, {set
$F\leftarrow1$.}\newline END WHILE$\newline$FOR {$k=0,...,n$} \newline%
\qquad{$M\left(  k\right)  =\max(\mathbf{s}\left(  k+1\right)  ,\mathbf{s}%
\left(  k+2\right)  ,....,\mathbf{s}\left(  \mathbf{d(s)}\right)  )$,}%
\newline\qquad{$S\left(  k\right)  =\mathbf{s(}k+1).$}\newline END
FOR\newline\medskip\textbf{OUTPUT:} $\left(  S\left(  k\right)  ,M\left(
k\right)  \,:1\leq k\leq n\right)  $.
\end{algorithm}


\section{Numerical Results}

To test the numerical performance and correctness of our algorithm, we
implement our algorithm in Matlab. In particular, we consider a 2-station
Jackson network with Poisson arrivals and exponential service times, so that
the true value of the steady-state distribution is known in closed form. In
the numerical test, we shall fix the routing matrix $Q=[0, 0.11; 0.1, 0]$ and
run the simulation algorithm for different arrival and service rates $\lambda$
and $\mu$. For each pair of $(\lambda, \mu)$, we generate 10000 i.i.d. samples
of the number of customers $(Y_{1}(\infty), Y_{2}(\infty))$.

We estimate the steady-state expectation $E[Y_{i}(\infty)]$ and the
correlation coefficient of $Y_{1}(\infty)$ and $Y_{2}(\infty)$ based on the
10000 i.i.d. samples. Since the 2-station system is a Jackson network, the
theoretic steady-state distribution of $Y_{i}(\infty)$ is known and the true
value of $E[Y_{i}(\infty)]=\phi/(\mu-\phi)$. Moreover, the true value of the
correlation coefficient is 0 as the joint distribution of $(Y_{1}(\infty),
Y_{2}(\infty))$ is of product form. In Table 1, for different $\mu$ and
$\lambda$, we report the simulation estimations and compare them with and the
true values. In detail, we report the 95\% confidence interval of
$E[Y_{i}(\infty)]$ estimated from the simulated samples. For the correlation,
we report the sample correlation coefficient and the $p$-value of the
hypothesis test that the two population are not correlated.


\begin{table}[h]
\caption{{\protect\footnotesize Simulation Estimation of $E[Y_{1}(\infty)]$,
$E[Y_{2}(\infty)]$ and $\text{Corr}(Y_{1}(\infty), Y_{2}(\infty))$ for
different $\lambda$ and $\mu$. ($p$-value $> 5$\% means no significant
correlation)}}%
\label{my-label}
\centering
\begin{tabular}
[c]{||l|l|l|l|l|l||}\hline
\multicolumn{6}{||l||}{Parameters}\\\hline
$\lambda$ & (0.2250, 0.7170) & (0.2200, 0.7670) & (0.2180, 0.7870) & (0.2160,
0.8070) & (0.2140, 0.8270)\\\hline
$\mu$ & (1.0000, 1.0000) & (1.0000, 1.0000) & (1.0000, 1.0000) & (1.0000,
1.0000) & (1.0000, 1.0000)\\\hline
\multicolumn{6}{||l||}{$E[Y_{1}(\infty)]$}\\\hline
TrueValue & 0.4286 & 0.4286 & 0.4286 & 0.4286 & 0.4286\\\hline
Simulation & 0.4265$\pm$0.0152 & 0.4204$\pm$0.0150 & 0.4247$\pm$0.0150 &
0.4376$\pm$0.0153 & 0.4228$\pm$0.0155\\\hline
\multicolumn{6}{||l||}{$E[Y_{2}(\infty)]$}\\\hline
TrueValue & 3.0000 & 4.0000 & 4.5556 & 5.2500 & 6.1429\\\hline
Simulation & 2.9355$\pm$0.0676 & 4.0468$\pm$0.0877 & 4.5844$\pm$0.0984 &
5.3057$\pm$0.1156 & 6.1620$\pm$0.1291\\\hline
\multicolumn{6}{||l||}{$\text{Corr}(Y_{1}(\infty),Y_{2}(\infty))$}\\\hline
Simulation & -0.0058 & -0.0128 & 0.0151 & 0.0011 & 0.0116\\\hline
$p$-value & 55.96\% & 19.90\% & 13.13\% & 91.13\% & 24.80\%\\\hline
\end{tabular}
\end{table}

Figure 1 and 2 compares the histogram of the 10000 simulation samples with the
true steady state distribution for two different values of $\lambda$ and $\mu
$. In both two cases, we can see that the empirical distribution of the i.i.d.
simulated samples is very close to the true distribution.
\begin{figure}[ht]
\begin{center}
\begin{subfigure}{0.5\textwidth}
\includegraphics[scale=0.36]{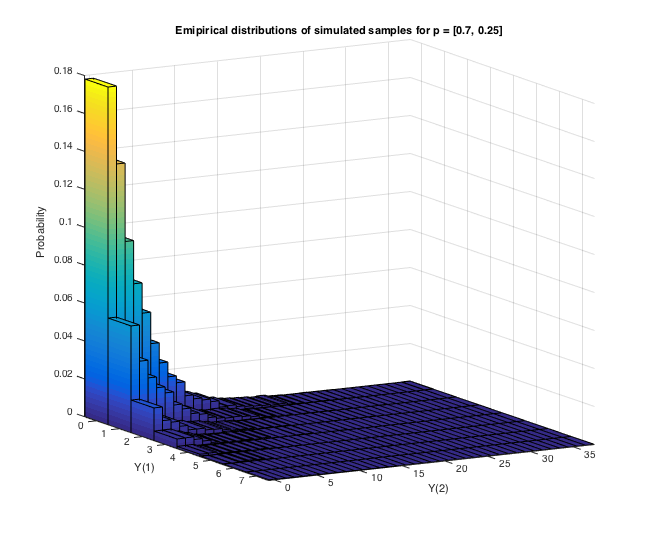}
\caption{}
\end{subfigure}
\begin{subfigure}{0.5\textwidth}
\includegraphics[scale=0.36]{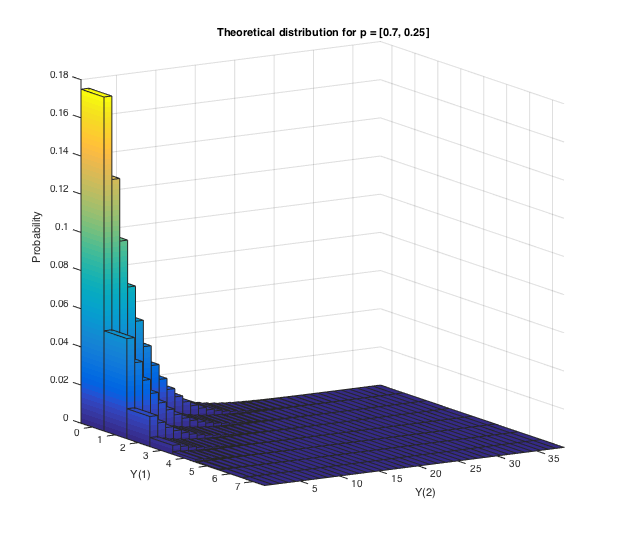}
\caption{}
\end{subfigure}
\par
\begin{subfigure}{0.5\textwidth}
\includegraphics[scale=0.36]{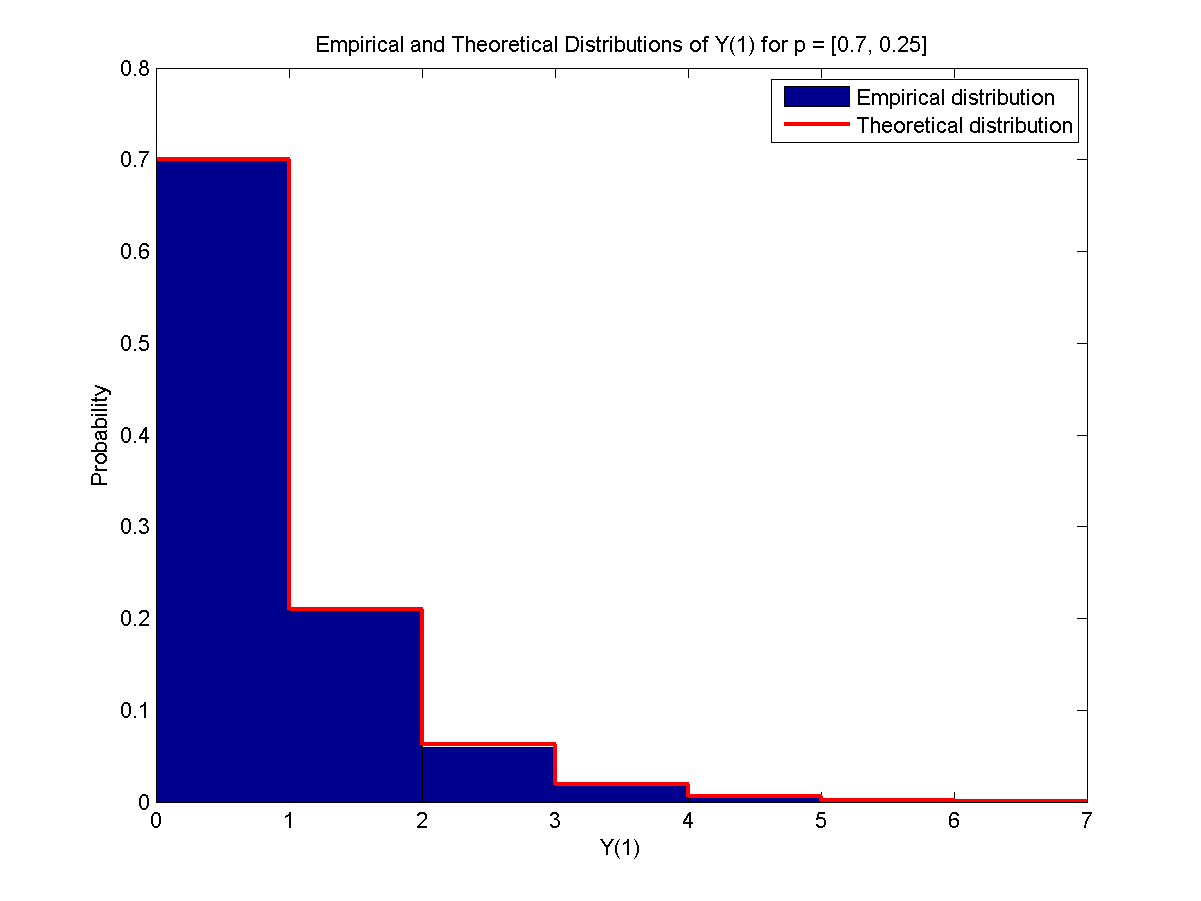}
\caption{}
\end{subfigure}\begin{subfigure}{0.5\textwidth}
\includegraphics[scale=0.36]{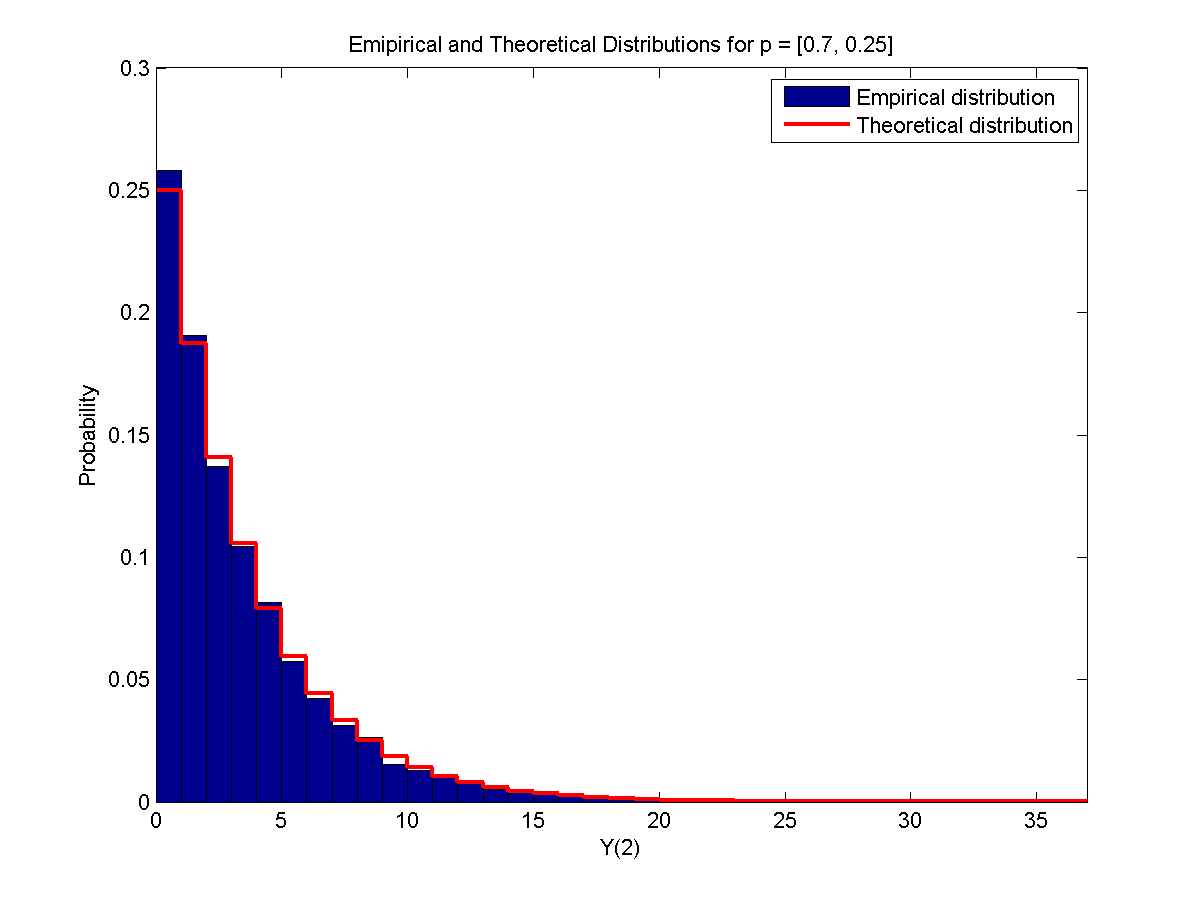}
\caption{}
\end{subfigure}
\end{center}
\caption{{\protect\footnotesize $\lambda=(0.225, 0.717)$, $\mu=(1,1)$. (A)
Histogram of the 10000 simulated samples of $(Y_{1}(\infty), Y_{2}(\infty))$.
(B) Theoretic steady-state distribution of $(Y_{1}(\infty), Y_{2}(\infty))$.
(C) Marginal distribution of $Y_{1}(\infty)$. (D) Marginal distribution of
$Y_{2}(\infty)$. }}%
\label{fig1}%
\end{figure}

\begin{figure}[ht]
\begin{center}
\begin{subfigure}{0.5\textwidth}
\includegraphics[scale=0.40]{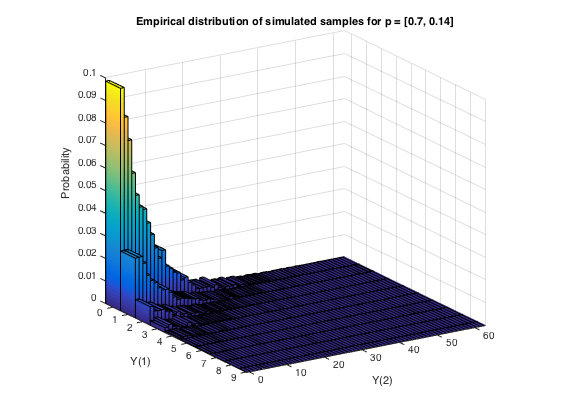}
\caption{}
\end{subfigure}
\begin{subfigure}{0.5\textwidth}
\includegraphics[scale=0.40]{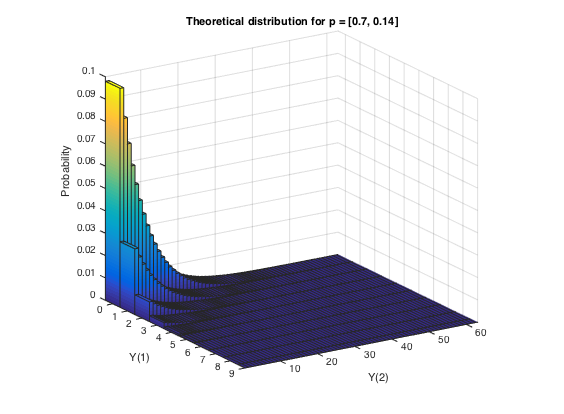}
\caption{}
\end{subfigure}
\par
\begin{subfigure}{0.5\textwidth}
\includegraphics[scale=0.36]{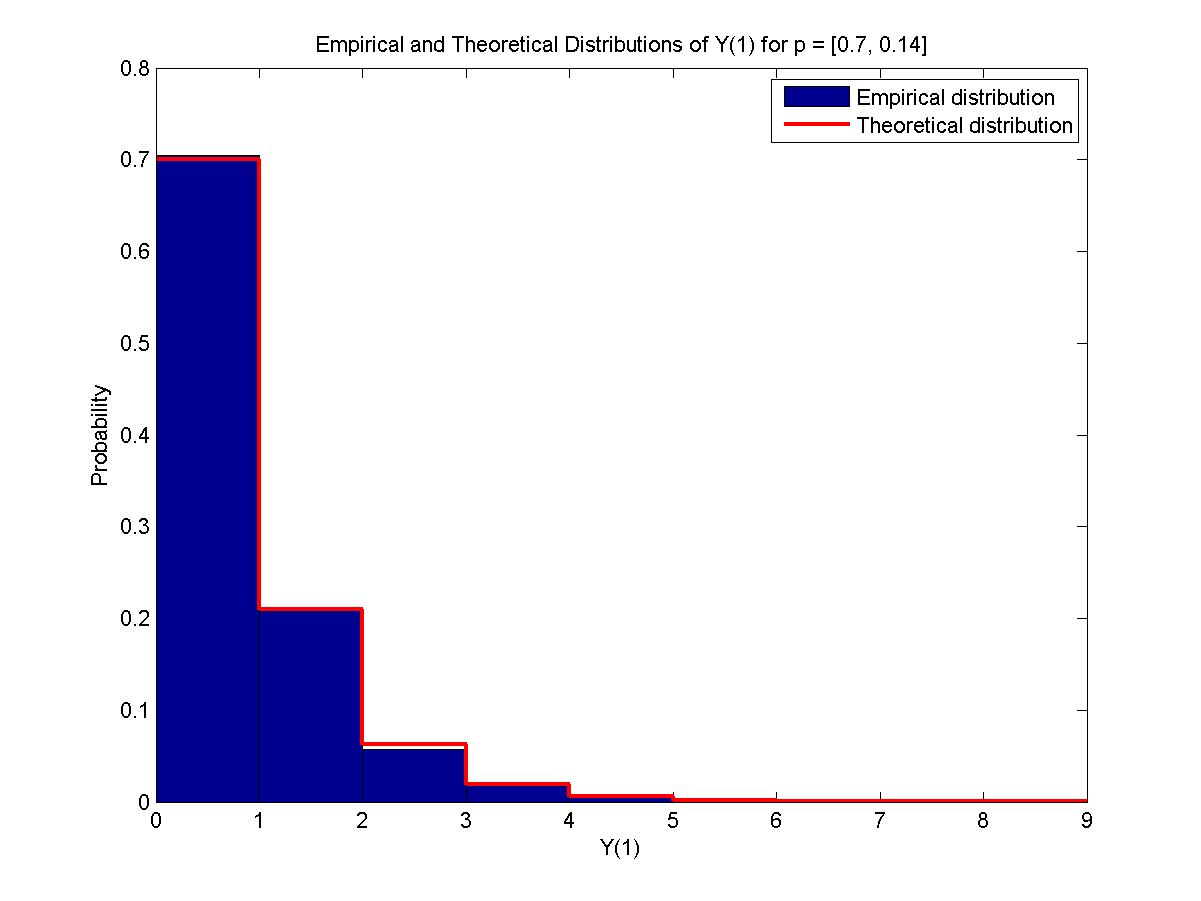}
\caption{}
\end{subfigure}\begin{subfigure}{0.5\textwidth}
\includegraphics[scale=0.36]{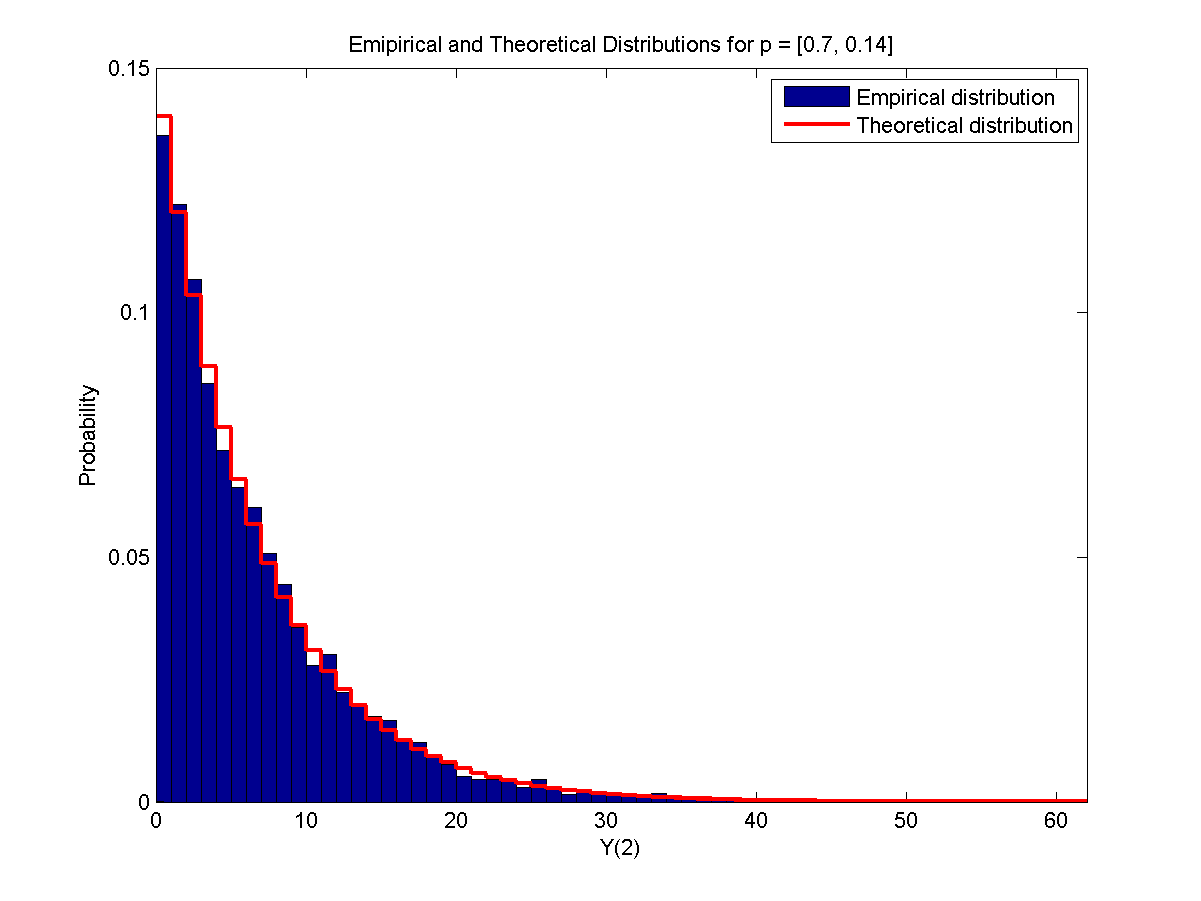}
\caption{}
\end{subfigure}
\end{center}
\caption{{\protect\footnotesize $\lambda=(0.214, 0.827)$, $\mu=(1,1)$. (A)
Histogram of the 10000 simulated samples of $(Y_{1}(\infty), Y_{2}(\infty))$.
(B) Theoretic steady-state distribution of $(Y_{1}(\infty), Y_{2}(\infty))$.
(C) Marginal distribution of $Y_{1}(\infty)$. (D) Marginal distribution of
$Y_{2}(\infty)$. }}%
\label{fig1}%
\end{figure}
\newpage
\section{Appendix: Technical Proofs}

\subsection{Technical Lemmas and the Proof of Theorem \ref{Thm_Domination} .}

Part i) of Theorem \ref{Thm_Domination} is a restatement of Lemma 4.2 in
\cite{Chang&etal}. Then, part ii)\ follows from the next lemma.

\begin{lem}
\label{lemma: vacation system initial state}Suppose we start the coupled
systems $\mathcal{N}^{+}$ and $\mathcal{N}^{\prime}$ empty from time $0$. Then
for any $t>0$, when $Y_{i}^{\prime}(t)=y_{i}$, then the service station $i$ in
system $\mathcal{N}^{+}$ must be one of the three following cases:

\begin{enumerate}
\item $Y^{+}_{i}(t)=y_{i}+1$ and the server is in service

\item $Y^{+}_{i}(t)=y\in\{1,2,...,y_{i}\}$ and the server is either in service
or vacation

\item $Y_{i}^{+}(t)=0$ and the server is in vacation.
\end{enumerate}
\end{lem}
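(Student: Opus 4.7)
The plan is to prove the three-case invariant by induction on the successive jump times of the combined point process driving $\mathcal{N}'$ and $\mathcal{N}^+$. Between jumps every quantity in the state is constant, so it suffices to check that if the invariant holds just before an event, it still holds just after. Because the interarrival and service distributions are continuous (Assumption 4), events at distinct stations occur at distinct times almost surely, so the updates can be processed one at a time. The base case at $t=0$ is immediate: both systems are empty, so $Y_i'(0)=0$, $\widehat{Y}_i^+(0)=0$, $S_i^+(0)=0$ at every station, which places each one into case 3.

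At any inductive step the event is either (a) an external arrival $dN_{0,i}=1$ at some $i$, or (b) a renewal jump $dD_i=1$ at some $i$ carrying a routing mark $r_i'(k)\in\{0,1,\dots,d\}\setminus\{i\}$ (self-loops are excluded by $Q_{ii}=0$). For (a), the SDE (\ref{SDE}) and the SDE (\ref{Eq:Evolution_Vacation}) both increase their queue counts by one and leave the server status alone; checking the three starting cases shows that case 1 remains case 1, case 2 remains case 2, and case 3 moves into case 2 (with the server still on vacation). For (b), at station $i$ the quantity $Y_i'$ decreases by one when positive, while $\widehat{Y}_i^+$ and $S_i^+$ are updated by the indicators $I(\widehat{Y}_i^{+-}>0)$ and $I(\widehat{Y}_i^{+-}>0)-I(S_i^{+-}>0)$ prescribed by (\ref{Eq:Evolution_Vacation}). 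Working through each of the three pre-event cases gives invariance at $i$; the only non-self transitions are case 1 to case 3 (when $y_i=0$ so the sole customer leaves and the server starts a vacation) and case 2 (with server on vacation) to case 1 (when $y=y_i$ and the vacation ends, pulling a customer into service).

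The subtle part, and the place I expect the main work, is the routing piece of a $D_i$-jump when $r_i'(k)=j\in\{1,\dots,d\}$, because $\mathcal{N}'$ and $\mathcal{N}^+$ disagree about whether a customer is actually transferred to $j$. When $S_i^{+-}=1$ (cases 1 and 2-with-service at $i$), the transfer occurs in both systems, and its effect on station $j$ is identical to an external arrival at $j$, hence already covered. When $S_i^{+-}=0$ (case 3 at $i$, or case 2 at $i$ with the server on vacation), the transfer is a virtual customer that is generated only in $\mathcal{N}'$; consequently $Y_j'$ increases by one while $(\widehat{Y}_j^+, S_j^+)$ stay put. I would then verify that this one-sided bump in $y_j$ preserves each of the three cases at $j$: case 1 at $j$ collapses into case 2, case 2 at $j$ remains in case 2 because its admissible range $\{1,\dots,y_j\}$ widens, and case 3 at $j$ is unaffected. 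Combined with the station-$i$ bookkeeping, these sub-case verifications close the induction and yield the lemma.
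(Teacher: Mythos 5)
Your event-by-event induction is sound and, once the case checks are carried out as you describe, it does prove the lemma; but it is a genuinely different and more laborious route than the paper's. The paper first observes that the three cases are equivalent to the single pathwise inequality $\widehat{Y}_{i}^{+}(t)\leq Y_{i}^{\prime}(t)$, where $\widehat{Y}_{i}^{+}=Y_{i}^{+}-S_{i}^{+}$ is the number waiting at station $i$: given this inequality, the case structure is automatic from $Y_{i}^{+}=\widehat{Y}_{i}^{+}+S_{i}^{+}$ with $S_{i}^{+}\in\{0,1\}$ (a value $Y_{i}^{+}=Y_{i}^{\prime}+1$ forces the server to be in service, and $Y_{i}^{+}=0$ forces it to be on vacation). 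The inequality itself is obtained in one stroke: $\widehat{Y}_{i}^{+}$ and $Y_{i}^{\prime}$ obey the same one-dimensional recursion $\mathrm{d}Y=\mathrm{d}A-I(Y_{-}>0)\,\mathrm{d}D_{i}$ with the same potential-departure process $D_{i}$, and the input of $\mathcal{N}^{+}$, namely $\mathrm{d}N_{0,i}+\sum_{j\neq i}I(S_{j}^{+}(t_{-})>0)\,\mathrm{d}D_{j,i}$, is pathwise dominated by the input $\mathrm{d}N_{0,i}+\sum_{j\neq i}\mathrm{d}D_{j,i}$ of $\mathcal{N}^{\prime}$; monotonicity of this queue map in its input then yields $\widehat{Y}_{i}^{+}\leq Y_{i}^{\prime}$ with no case analysis and no separate treatment of the virtual-customer transfers, which is exactly the point your "subtle part" handles by hand. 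What your approach buys is self-containedness (no appeal to monotonicity of the reflection map in the arrival process) and it is precisely the style the paper itself needs for the harder part iii) of Theorem \ref{Thm_Domination}; its cost is the bookkeeping, and indeed your side enumeration of transitions has a small slip: after a $D_{i}$-jump, case 2 with the server in service and $Y_{i}^{+}(t_{-})=1$ passes to case 3, a transition you did not list. This does not affect the validity of the invariant check, but it illustrates why reducing the lemma to the single inequality $\widehat{Y}_{i}^{+}\leq Y_{i}^{\prime}$, as the paper does, is the more economical argument.
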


\textit{Proof of Lemma \ref{lemma: vacation system initial state}}: The result
follows directly by comparing the evolution of $Y_{i}^{\prime}(\cdot)$ given
by (\ref{SDE}), against the evolution of the number of customers waiting in
the $i$-th queue of $\mathcal{N}^{+}$, namely $\widehat{Y}_{i}^{+}\left(
\cdot\right)  $, which satisfies (\ref{Eq:Evolution_Vacation}). The equations
are monotone with respect to the input process, which is strictly smaller for
the network $\mathcal{N}^{+}$ compared to $\mathcal{N}^{\prime}$ because
\[
\sum_{j:j\neq i,1\leq j\leq d}I(\widehat{S}_{j}^{+}\left(  t_{-}\right)
>0)\text{d}D_{j,i}\left(  t\right)  \leq\sum_{j:j\neq i,1\leq j\leq d}%
\text{d}D_{j,i}\left(  t\right)  .
\]
So, we conclude that $\widehat{Y}_{i}^{+}\left(  t\right)  \leq Y_{i}^{\prime
}(t)$. Therefore, if $Y_{i}^{\prime}(t)=y_{i}$, we have $\widehat{Y}_{i}%
^{+}\left(  t\right)  \leq y_{i}$. As $\widehat{Y}_{i}^{+}\left(  t\right)  $
is the number of customers who are waiting for entering service, we can
conclude that either $Y_{i}^{+}(t)=\widehat{Y}_{i}^{+}\left(  t\right)  +1$ if
the server is in service and $Y_{i}^{+}(t)=\widehat{Y}_{i}^{+}\left(
t\right)  $ if the server is on vacation, and hence we are done.\hfill$\Box$

In order to prove part iii) of Theorem \ref{Thm_Domination} we introduce some notation.

Let $y=(y_{1},...,y_{d})$ be a fixed vector in $\mathbb{N}_{0}^{d}%
=\{0,1,...\}^{d}$. Consider three vacation networks $\mathcal{N}^{+}$,
$\mathcal{N}^{++}$, $\mathcal{N}^{+-}$ that have the same network topology and
are driven by the same arrival and activity sequences, namely, $(A_{i}\left(
n\right)  :n\geq0)$ and $(\upsilon_{i}^{0}\left(  n\right)  ,r_{i}\left(
n\right)  :n\geq1)$, except for their initial state at time $0$. In
particular, we set $Y_{i}^{++}(0)=y_{i}+1$ with all servers in service for all
$i$, and $Y_{i}^{+-}(0)=0$ with all servers in vacation. The state of each
service station in $\mathcal{N}^{+}$ at time $0$ is of any one of three cases
as described in Lemma \ref{lemma: vacation system initial state}. More
precisely, we have the following system of SDEs for $\widehat{Y}_{i}^{++}$,
$\widehat{S}_{i}^{++}$, and $Y_{i}^{++}$ with $i\in\{1,...,d\}$,%
\begin{align*}
\text{d}\widehat{Y}_{i}^{++}\left(  t\right)   &  =\text{d}N_{0,i}\left(
t\right)  +\sum_{j:j\neq i,1\leq j\leq d}I(\widehat{S}_{j}^{++}\left(
t_{-}\right)  >0)\text{d}D_{j,i}\left(  t\right)  -I(\widehat{Y}_{i}%
^{++}\left(  t_{-}\right)  >0)\text{d}D_{i}\left(  t\right)  ,\\
\text{d}\widehat{S}_{i}^{++}\left(  t\right)   &  =(I(\widehat{Y}_{i}%
^{++}\left(  t_{-}\right)  >0)-I(\widehat{S}_{i}^{++}\left(  t_{-}\right)
>0))\text{d}D_{i}\left(  t\right)  ,\\
\widehat{Y}_{i}^{++}\left(  0\right)   &  =y_{i}+1,\text{ }\widehat{S}%
_{i}^{++}\left(  0\right)  =1,\\
Y_{i}^{++}\left(  t\right)   &  =\widehat{Y}_{i}^{++}\left(  t\right)
+\widehat{S}_{i}^{++}\left(  t\right)  .
\end{align*}
The SDEs for $\widehat{Y}_{i}^{+},\widehat{S}_{i}^{+}$, $Y_{i}^{+}$, and
$\widehat{Y}_{i}^{+-},\widehat{S}_{i}^{+-}$, $Y_{i}^{+-}$are exactly the same,
except for the boundary conditions. In particular, $\widehat{Y}_{i}^{+}\left(
0\right)  =y_{i},$ $\widehat{S}_{i}^{++}\left(  0\right)  \in\{0,1\}$, and
$\widehat{Y}_{i}^{+-}\left(  0\right)  =0,$ $\widehat{S}_{i}^{+-}\left(
0\right)  =0$. Then we have the following comparison result which implies part
iii) of Theorem \ref{Thm_Domination}:

\begin{lem}
\label{Lemma_Comp_Vacation_vs_Vacation}In order to distinguish servers
whenever there might be ambiguity we shall call the server of the $i$-th
service station in $\mathcal{N}^{++}$ the server $i^{+}$, the server of the
$i$-th station in $\mathcal{N}^{+}$ is called server $i$, and the $i$-th
server in $\mathcal{N}^{+-}$ is called $i^{-}$. We claim that the following
three statements hold for all servers $i$, $i^{+}$, and $i^{-}$
$(i=1,2,...,d)$ and at any $t\geq0$ (analogous statements to 2. and 3. hold
replacing $i^{+}$ by $i$ and $i$ by $i^{-}$)

\begin{enumerate}
\item $Y_{i}^{++}(t)\geq Y_{i}^{+}(t)\geq Y_{i}^{+-}(t)$.

\item If $Y_{i}^{++}(t)=Y_{i}^{+}(t)$, server $i^{+}$ and server $i$ are both
in service or both in vacation. Similarly, $Y_{i}^{+}(t)=Y_{i}^{+-}(t)$,
server $i$ and server $i^{-}$ are both in service or both in vacation

\item If server $i^{+}$ is in vacation, server $i$ is also in vacation.
Similarly, if $i$ is in vacation, server $i^{-}$ is also in vacation.
\end{enumerate}
\end{lem}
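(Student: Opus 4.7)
The plan is to establish all three claims simultaneously by induction on the (countably many) jump times of the driving processes $N_{0,i}$, $D_i$, and $D_{j,i}$. Since the paths of $\widehat{Y}^{\bullet}$ and $S^{\bullet}$ are piecewise constant between events and the three networks $\mathcal{N}^{++}$, $\mathcal{N}^{+}$, and $\mathcal{N}^{+-}$ share the same driving processes (hence the same jump times), nothing changes between events, so it suffices to verify a suitable invariant at $t=0$ and after each event. The comparison will be carried out pairwise: first for $(A,B) = (\mathcal{N}^{++}, \mathcal{N}^{+})$ and then, by the same argument verbatim, for $(A,B) = (\mathcal{N}^{+}, \mathcal{N}^{+-})$.

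The invariant that I would propagate, slightly stronger than what the lemma asserts, is the componentwise pair
\[
\widehat{Y}^A_i(t) \geq \widehat{Y}^B_i(t), \qquad S^A_i(t) \geq S^B_i(t), \qquad i\in\{1,\ldots,d\}.
\]
From this pair the three claims of the lemma follow at once: summing the two inequalities and using $Y = \widehat{Y} + S$ yields claim~1; claim~3 is the contrapositive of the second inequality since $S\in\{0,1\}$; and if $Y^A_i = Y^B_i$, the two nonnegative gaps $\widehat{Y}^A_i - \widehat{Y}^B_i$ and $S^A_i - S^B_i$ sum to zero and must therefore both vanish, which gives claim~2. The three listed initial conditions satisfy the invariant (e.g.\ $\widehat{Y}^{++}_i(0) = y_i+1 \geq \widehat{Y}^{+}_i(0)$ and $S^{++}_i(0) = 1 \geq S^{+}_i(0)$), providing the base case.

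For the inductive step one exhausts the three event types. An external arrival $dN_{0,i}=1$ adds one to $\widehat{Y}_i$ in both systems and leaves every other coordinate untouched, so both inequalities are trivially preserved. An internal routing $dD_{j,i}=1$ with $j\neq i$ increases $\widehat{Y}^A_i$ by $S^A_j(t-)$ and $\widehat{Y}^B_i$ by $S^B_j(t-)$, so the pre-event inequality $S^A_j \geq S^B_j$ ensures the $A$-increment dominates the $B$-increment, while no $S$ coordinate moves. The only delicate case is the activity completion $dD_i=1$: writing out the SDE (\ref{Eq:Evolution_Vacation}) for a single jump reduces the update at station $i$ to $\widehat{Y}^{\bullet}_{i,\mathrm{new}} = (\widehat{Y}^{\bullet}_{i,\mathrm{old}} - 1)^+$ and $S^{\bullet}_{i,\mathrm{new}} = \mathbf{1}\{\widehat{Y}^{\bullet}_{i,\mathrm{old}}>0\}$, both of which are monotone nondecreasing functions of $\widehat{Y}^{\bullet}_{i,\mathrm{old}}$. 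The pre-event inequality $\widehat{Y}^A_i \geq \widehat{Y}^B_i$ therefore propagates to both post-event quantities, closing the induction.

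The main (and essentially only) obstacle I foresee is recognizing that the correct invariant lives on the refined pair $(\widehat{Y}, S)$ rather than on $Y$ itself. In the configuration $S^A_i=1$, $S^B_i=0$, $Y^A_i = Y^B_i + 1$, the weaker statement $Y^A \geq Y^B$ alone cannot be carried through a $D_i$ event, because $Y^A_i$ drops by one at that event while $Y^B_i$ does not, which can momentarily destroy the inequality. Upgrading to $(\widehat{Y}, S)$ and observing that each per-event update is a monotone function of the pre-event $(\widehat{Y}, S)$ of the affected stations reduces every case to a short bookkeeping check.
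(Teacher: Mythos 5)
Your proof is correct, and it reaches the same conclusion by a noticeably cleaner route than the paper's. Both arguments are inductions over the event epochs $t(n)$ of the shared driving processes (nothing changes between events, and by Assumption 4 distinct renewal processes do not jump simultaneously), and both compare the systems pairwise, $(\mathcal{N}^{++},\mathcal{N}^{+})$ and then $(\mathcal{N}^{+},\mathcal{N}^{+-})$. The difference is the induction hypothesis: the paper propagates statements 1--3 themselves, phrased in terms of $Y_i$ and the server's current activity, which forces the long case analysis (Cases 1--3 with sub-cases on whether $Y_i^{++}=Y_i^{+}$, whether the server is in service or vacation, whether $Y_i^{++}=1$ or $>1$, and what the routing mark is). You instead propagate the componentwise ordering of the refined state, $\widehat{Y}^A_i\geq\widehat{Y}^B_i$ and $S^A_i\geq S^B_i$, and observe that each per-event update is monotone in the pre-event state: external and internal arrivals add ordered increments (the internal one being $I(S^A_j(t-)>0)\geq I(S^B_j(t-)>0)$), and a $D_i$ completion maps $(\widehat{Y}_i,S_i)$ to $\bigl((\widehat{Y}_i-1)^+,\,I(\widehat{Y}_i>0)\bigr)$, which depends monotonically on $\widehat{Y}_i$ alone. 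It is worth noting that your invariant is in fact logically equivalent to the conjunction of statements 1--3 (given $S\in\{0,1\}$ and $Y=\widehat{Y}+S$), so the two proofs carry the same information through the induction; what your formulation buys is that the one-step verification collapses to three short checks instead of the paper's case bookkeeping, and you correctly identify why the weaker invariant $Y^A\geq Y^B$ alone would not close the induction at a $D_i$ event. The only small imprecision is treating a $dD_{j,i}=1$ routing and the corresponding $dD_j=1$ completion as separate events when they are the same jump of $D_j$; since every coordinate's update uses only pre-event values, your two checks apply simultaneously and the argument stands, but it would be worth a sentence acknowledging this.
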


\textit{Proof of Lemma \ref{Lemma_Comp_Vacation_vs_Vacation}}: Let's first
prove $Y_{i}^{++}(t)\geq Y_{i}^{+}(t)$.

First let's check if the claim is true for $t=0$. Note that $Y_{i}%
^{++}(0)=y_{i}+1$ and $Y_{i}^{+}\in\{0,1,...,y_{i}+1\}$ and hence Statement
(1) holds. As server $i^{+}$ is in service at time 0, Statement (2) also hold.
Finally, if $Y_{i}^{+}(0)=Y_{i}^{++}(0)=y_{i}+1$, service station $i$ is of
the first case in Lemma \ref{lemma: vacation system initial state} which means
both server $i$ and $i^{+}$ are at service. In summary, the claim is true for
$t=0$.\newline

Let $E(t)=\sum_{i=1}^{d}N_{0,i}(t)+\sum_{i=1}^{d}D_{i}(t)$ be the counting
process of events that occur in the network. Define $t(n)=\inf\{t\geq
0:E(t)=n\}$ to be the time at which the $n$-th event occurs for $n\geq1$ and
set $t\left(  0\right)  =0$. We shall prove the statements 1-3 only for $i$
and $i^{+}$ first by induction on $n\geq0$ at times $t\left(  n\right)  $,
since there are no changes inside the network population between two event
epochs. We have verified that statements 1-3 are valid at $t\left(  0\right)
$. Assume by induction hypothesis, that statements 1-3 hold for $t(n-1)$, we
need to consider several cases at time $t(n)$.\newline

\textit{Case 1}: $t(n)$ corresponds to an arrival from $N_{0,i}(\cdot
)$.\newline In this case, $Y_{i}^{++}(t(n))=Y_{i}^{++}(t(n-1))+1$ and
$Y_{i}^{+}(t(n))=Y_{i}^{+}(t(n-1))+1$. According to the dynamics of vacation
system, a new arrival from $N_{0,i}(\cdot)$ does not change the type of
activity that is going on in servers $i^{+}$ and $i$. So statements 1-3 hold
for server $i$ and $i^{+}$ at $t(n)$. As to all the other servers, there are
no changes between $t(n-1)$ and $t(n)$. In summary, Statement 1-3 hold for all
servers at $t(n)$.\newline

\textit{Case 2}: $t(n)$ corresponds to an arrival from $D_{i}(\cdot)$ and
$Y_{i}^{++}(t(n-1))=Y_{i}^{+}(t(n-1))$.\newline By induction hypothesis,
server $i$ and $i^{+}$ are in the same type of activity at time $t\left(
n-1\right)  $. Suppose that both servers $i$ and $i^{+}$ are at vacation at
$t(n-1)$, it is clear from the dynamics that $Y_{i}^{++}(t(n))=Y_{i}%
^{+}(t(n))$. If $Y_{i}^{++}(t(n))>0$, it means that there was someone waiting
and therefore at time $t\left(  n\right)  $, coming from vacation, now both
$i^{+}$ and $i$ are now in service at time $t(n)$; otherwise, from the same
logic, $Y_{i}^{++}(t(n))=0$, implies that both $i^{+}$ and $i$ are on vacation
at $t(n)$. Besides, there are no changes on other servers between $t(n-1)$ and
$t(n)$, because at $t(n-1)$ the servers where on vacation. Therefore,
Statement 1-3 hold for all servers at $t(n)$.\newline

If both server $i$ and $i^{+}$ are in service at $t(n-1)$ and $Y_{i}%
^{++}(t(n-1))=Y_{i}^{+}(t(n-1))=1$, then $Y_{i}^{++}(t(n))=Y_{i}^{+}(t(n))=0$
and both server $i^{+}$ and $i$ are in vacation at $t(n)$. Let $j=r_{i}%
(B_{i}(t(n)))$. If $j=0$, there are no changes on other servers between
$t(n-1)$ and $t(n)$, so Statement 1-3 hold for all servers at $t(n)$. If
$j>1$, then we can apply the argument of \textit{Case 1} to server $j$,
$j^{+}$, and there are no changes on the rest servers other than $i^{+},i,$
$j^{+}$ and $j$. So statements 1-3 hold for all servers at $t=t(n)$. \newline

If both server $i$ and $i^{+}$ are in service at $t(n-1)$ and $Y_{i}%
^{++}(t(n-1))=Y_{i}^{+}(t(n-1))>1$, the argument is similar to when
$Y_{i}^{++}(t(n-1))=Y_{i}^{+}(t(n-1))=1$ except that now both server $i^{+}$
and $i$ are in service at $t(n)$.\newline

\textit{Case 3}: $t(n)$ corresponds to an arrival from $D_{i}(\cdot)$ and
$Y_{i}^{++}(t(n-1))>Y_{i}^{+}(t(n-1))$\newline

If server $i^{+}$ is in vacation at $t(n-1)$, by induction hypothesis, server
$i$ is also in vacation at $t(n-1)$. Then, there are no changes on all other
servers. Besides, $Y_{i}^{++}(t(n))=Y_{i}^{++}(t(n-1))$ and $Y_{i}%
^{+}(t(n))=Y_{i}^{+}(t(n-1))$ and hence $Y_{i}^{++}(t(n))>Y_{i}^{++}(t(n))$.
As $Y_{i}^{++}(t(n))>0$, server $i^{+}$ is in service at $t(n)$ and hence we
do not contradict statement 3 for servers $i^{+}$ and $i$ at time $t(n)$. In
summary, we conclude that Statements 1-3 hold for all servers at time
$t(n)$.\newline

If server $i^{+}$ is in service and $Y_{i}^{++}(t(n-1))=1$ (and so $Y_{i}%
^{+}(t(n-1))=0$), $Y_{i}^{++}(t(n))=Y_{i}^{+}(t(n))=0$ and server $i^{+}$ and
$i$ are both in vacation at time $t(n)$. Let $j=r_{i}(D_{i}(t(n)))$. If $j=0$,
there are no changes on all other servers and hence statements 1-3 hold for
all servers at $t(n)$. Otherwise, we have $Y_{j}^{++}(t(n))=Y_{j}%
^{++}(t(n-1))+1$ and $Y_{j}^{+}(t(n))=Y_{j}^{+}(t(n-1))$, as $Y_{j}%
^{++}(t(n-1))\geq Y_{j}^{+}(t(n-1))$ by induction hypothesis, $Y_{j}%
^{++}(t(n))>Y_{j}^{+}(t(n))$. So statement 1-2 hold for server $j^{+}$ and
$j$. The type of activity that occurs in server $j^{+}$ and $j$ remains the
same what was going on at time $t(n-1)$ and hence statement 3 holds. Since
there are no changes on the rest servers other than $i^{+},i,j^{+}$ and $j$,
statement 1-3 hold at time $t(n)$ for all servers.\newline

If server $i^{+}$ is in service at $t(n-1)$ and $Y_{i}^{++}(t(n-1))>1$,
$Y_{i}^{++}(t(n))=Y_{i}^{++}(t(n-1))-1>0$ and server $i^{+}$ is still in
service at $t(n)$. So statement 3 holds for servers $i^{+}$ and $i$ at time
$t(n)$. As $Y_{i}^{++}(t(n-1))\geq Y_{i}^{+}(t(n-1))+1$ and $Y_{i}%
^{+}(t(n))\geq Y_{i}^{+}(t(n))$, $Y_{i}^{++}(t(n))\geq Y_{i}^{+}(t(n))$ and
statement 1 holds for server $i$. In case $Y_{i}^{++}(t(n))=Y_{i}^{+}(t(n))$,
$Y_{i}^{+}(t(n))>0$ and hence both server $i^{+}$ and $i$ are in service at
$t(n)$ and statement 2 holds. Following a similar argument as when server
$i^{+}$ is in service at $t(n-1)$ and $Y_{i}^{+}(t(n-1))=1$, we can check that
statement 1-3 hold for all the other servers. As a result, we can conclude
that statement 1-2 hold at time $t(n)$ for all servers.\newline

By induction, and by the nature of the processes, which changes only at times
$t\left(  n\right)  $, statements 1-3 hold for all $t\geq0$.\newline

To prove that $Y_{i}^{+}(t)\geq Y_{i}^{+-}(t)$, we can use the same induction
arguments simply replacing $Y_{i}^{++}(t)$ with $Y_{i}^{+}(t)$, and $Y_{i}%
^{+}(t)$ with $Y_{i}^{+-}(t)$ in statements 1-3. The induction part is exactly
the same, so we are done if we can check that the three statements all hold at
time $t\left(  0\right)  $.\newline

As $Y_{i}^{+-}(0)=0$ and all servers $i^{-}$ are in vacation, statement 1-3
immediately hold. If $Y_{i}^{+}(0)=Y_{i}^{+-}(0)$, then $Y_{i}^{+}(0)=0$ and
service station $i$ is in the last case as in Lemma
\ref{lemma: vacation system initial state}, hence both $i$ and $i^{-}$ are in
vacation and statement 2 holds. In summary, Statement 1-3 all hold for time
$t\left(  0\right)  =0$ and thus the result follows.\hfill$\Box$

\subsection{Recapitulation of the Main Procedure and Proof of Theorem
\ref{Thm_Main}\label{Sec_Proof_Main}}

In order to prove Theorem \ref{Thm_Main}, we need to recapitulate on the
execution of our Main Procedure. Let us go back to equation
(\ref{Eq:Evolution_Vacation}) and allow us write
\[
Y_{i}^{+}\left(  t;T,y\right)  =Y_{i}^{+}\left(  t;T\right)
\]
to recognize the boundary condition in (\ref{Eq:Evolution_Vacation}).
Moreover, we recall that $y_{i}=\bar{Y}_{i}^{\prime}\left(  -T\right)  $, from
equation (\ref{Eq:InitialCondition}). For any $T>0$ define the event
\[
C_{T}=\{\text{for all }t\in\lbrack0,T]\text{ there is }i\text{ such that
}Y_{i}^{+}(k;T,\bar{Y}^{\prime}\left(  -T\right)  )>0\}.
\]
Then put $\bar{\tau}=\inf\{t\geq0:\bar{C}_{t}$ occurs$\}$. Assuming that the
output indeed follows the steady state distribution, the statement of Theorem
\ref{Thm_Main} concerning the computational cost measure in terms of random
numbers generated will follows if we can show that there exists $\delta>0$
such that $E[\exp\left(  \delta\bar{\tau}\right)]  <\infty$.

We start by noting that
\[
P\left(  \bar{\tau}>u\right)  =P\left(  C_{u}\right)  .
\]
In order to compute $P\left(  C_{n}\right)  $ we can think forward in time, in
particular consider
\begin{align*}
\text{d}\widehat{Y}_{i}^{+}\left(  t;0,y\right)   &  =\text{d}N_{0,i}\left(
t;0\right)  +\sum_{j:j\neq i,1\leq j\leq d}I(\widehat{S}_{j}^{+}\left(
t_{-};0,y\right)  >0)\text{d}D_{j,i}\left(  t;0\right)  -I(\widehat{Y}_{i}%
^{+}\left(  t_{-};0,y\right)  >0)\text{d}D_{i}\left(  t;0\right)  ,\\
\text{d}\widehat{S}_{i}^{+}\left(  t;0,y\right)   &  =(I(\widehat{Y}_{i}%
^{+}\left(  t_{-};0,y\right)  >0)-I(\widehat{S}_{i}^{+}\left(  t_{-}%
;0,y\right)  >0))\text{d}D_{i}\left(  t;0\right)  \text{,}\\
\widehat{Y}_{i}^{+}\left(  0;0,y\right)   &  =y,\text{ }\widehat{S}_{i}%
^{+}\left(  0;0,y\right)  =1.
\end{align*}
Note the relation between $D_{i}\left(  t;0\right)  $ and $D_{i}\left(
-t\right)  $, defined in (\ref{Eq:Two_Sided_Processes}), in particular
$D_{i}(-t)=-D_{i}\left(  t;0\right)  \leq0$ (similarly $N_{0,i}\left(
-t\right)  =N_{0,i}\left(  t;0\right)  $). Then let $Y_{i}^{+}\left(
t;0,y\right)  =\widehat{Y}_{i}^{+}\left(  t;0,y\right)  +\widehat{S}_{i}%
^{+}\left(  t;0,y\right)  $ we have that
\[
P\left(  C_{u}\right)  =P\left(  \text{for all }t\in\lbrack0,u]\text{, there
is }i\text{ such that }Y_{i}^{+}\left(  t;0,\bar{Y}^{\prime}\left(  0\right)
\right)  >0\right)  .
\]
The strategy is to first describe the evolution of $Y^{+}\left(
\cdot;0,y\right)  $ in terms of a Markov process. We need to track the
residual times associated with each renewal process and the number of people
both in queue and in service in each station. In particular, define
\[
G_{i}\left(  t\right)  =\sup\{|A_{i}\left(  -n\right)  |:1\leq n\leq
N_{0,i}\left(  t;0\right)  +1\}-t.
\]
Similarly, we define
\[
H_{i}\left(  t\right)  =\sup\{|B_{i}\left(  -n\right)  |:1\leq n\leq
D_{i}\left(  t;0\right)  \}-t.
\]
Then we let $t\left(  n\right)  $ be the times at which events occur, that is,
$t\left(  1\right)  <t\left(  2\right)  <...$ are the discontinuity points of
the process $\{E\left(  t\right)  :t\geq0\}$ defined as $E(t)=\sum_{i}%
N_{0,i}(t;0)+\sum_{i}D_{i}(t;0)$. Let us write $\Theta_{i}^{+}(n)=(\widehat{Y}%
_{i}^{+}(t(n)),\widehat{S}_{i}^{+}(t(n)))$ and define $\Xi^{+}\left(
n\right)  =(\Xi_{i}\left(  n\right)  :1\leq i\leq d)$ as
\[
\Xi_{i}^{+}\left(  n\right)  =(\Theta_{i}^{+}(n),G_{i}\left(  t\left(
n\right)  \right)  ,H_{i}\left(  t\left(  n\right)  \right)  ).
\]
Note that $\{\Xi^{+}\left(  n\right)  :n\geq0\}$ forms a Markov chain and we
are given the initial condition $\Xi_{i}^{+}\left(  0\right)  =(\bar{Y}%
_{i}^{\prime}(0),1,G_{i}\left(  0\right)  ,H_{i}\left(  0\right)  )$. Now,
define
\[
\tau\left(  c\right)  =\inf\{n\geq0:\sum_{i=1}^{d}\Theta_{i}^{+}\left(
n\right)  \leq c\},
\]
for some $c>0$. Following a similar approach to \cite{Gamarnik&Zeevi}, due to
Assumption 2, we now can show that there exists $c>0$ such that $E[\exp
(\delta\tau\left(  c\right)  )]<\infty$. Moreover, because the inter-arrivals
have unbounded support a geometric trial argument will yield that if
$\delta>0$ is chosen sufficiently small then $E[\exp\left(  \delta\tau\left(
0\right)  \right)  ]<\infty$. In turn, this bound implies that $E[\exp\left(
\delta\bar{\tau}\right)]  <\infty$.

Next we want to show that the output indeed follows the target steady state
distribution. This portion follows precisely from the validity of the DCFTP protocol.

Using the similar notation of $Y^{+}(t;T,y)$, we define $Y(t;T,y)$ as the
number of customers in a GJN start with $Y(0;T,y)=y$ and is driven by the same
sequence of inter-arrival times, service requirements and routing indices as
$\mathcal{N}^{+}$ on $[-T,0]$. Given the comparison results in Theorem
\ref{Thm_Domination}, given that $Y^{+}(-T^{\prime\prime})=0$, we can conclude
that for all $T>T^{\prime\prime}$,
\[
\sum_{i}Y_{i}(T-T^{\prime\prime};T,0)\leq\sum_{i}Y_{i}(T-T^{\prime\prime
+}(-T))\leq\sum_{i}Y_{i}^{+}(T-T^{\prime\prime+}(-T))=\sum_{i}Y^{+}%
_{i}(-T^{\prime\prime})=0
\]
and hence $Y(T-T^{\prime\prime};T,0)=0$. Therefore, for any $T>T^{\prime
\prime}$
\[
Y(T;T,0)=Y(T; T, Y(T-T^{\prime\prime};T,0))=Y(T^{\prime\prime};T^{\prime
\prime},0).
\]
As the process $Y(\cdot)$ has a unique stationary distribution (see
\cite{Sigman}), we can conclude $Y(T^{\prime\prime};T^{\prime\prime}%
,0)=\lim_{T\to\infty}Y(T;T,0)$ follows the stationary distribution.

\textbf{Acknowledgement:} Blanchet acknowledges support from the NSF through
the grants CMMI-0846816 and 1069064. Chen acknowledges support from the NSF through the grant CMMI-1538102.\newpage

\end{document}